\documentclass[11pt,letterpaper]{amsart}

\usepackage[english]{babel}
\usepackage[OT2,T1]{fontenc} 

\usepackage{mathtools}
\usepackage{amssymb}
\usepackage{stmaryrd}
\usepackage{bm}
\usepackage{tikz-cd}
\usepackage[version=4]{mhchem}

\usepackage{comment}
\usepackage{xcolor}
\usepackage{enumitem}
\usepackage{graphicx}
\usepackage{setspace}
\usepackage{url}

\usepackage[title]{appendix}
\usepackage{chngcntr}
\usepackage{apptools}
\usepackage{thmtools}

\usepackage[pagebackref]{hyperref}
\usepackage[nameinlink]{cleveref}
\usepackage{bookmark}

\definecolor{crimsonglory}{rgb}{0.75, 0.0, 0.2}
\definecolor{darkpowderblue}{rgb}{0.0, 0.2, 0.6}

\hypersetup{
	colorlinks = true,
	urlcolor   = darkpowderblue,
	linkcolor  = darkpowderblue,
	citecolor  = crimsonglory
}

\theoremstyle{plain}
\newtheorem{theorem}{Theorem}[section]
\newtheorem{definition}[theorem]{Definition}

\newtheorem{prop}[theorem]{Proposition}

\newtheorem{claim}[theorem]{Claim}
\newtheorem{remark}[theorem]{Remark}
\newtheorem{remarks}[theorem]{Remarks}
\newtheorem{conj}[theorem]{Conjecture}

\DeclareMathOperator{\SL}{SL}

\DeclareMathOperator{\atyp}{Atyp}

\DeclareMathOperator{\ord}{ord}

\DeclareMathOperator{\ssing}{ssing}

\DeclareMathOperator{\bad}{bad}
\DeclareMathOperator{\cd}{cd}
\DeclareMathOperator{\strspec}{strspec}

\newcommand{\Sum}[2]{\displaystyle\sum_{#1}^{#2}}

\newcommand{\N}{\mathbb{N}}
\renewcommand{\C}{\mathbb{C}}
\newcommand{\Q}{\mathbb{Q}}

\newcommand{\CE}{\mathcal{E}}

\newcommand{\ld}{,\ldots,}

\DeclareSymbolFont{cyrletters}{OT2}{wncyr}{m}{n}
\DeclareMathSymbol{\Sha}{\mathalpha}{cyrletters}{"58}

\newcommand{\mathsym}[1]{{}}
\newcommand{\unicode}[1]{{}}

\title{Supersingular reduction and strongly special intersections in powers of the modular curve}
\author{Georgios Papas}
\address{Institute for Advanced Study\\
	1 Einstein Drive\\
	Princeton, N.J. 08540\\
	U.S.A.}
\email{gpapas@ias.edu}

\begin{document}
\subjclass[2020]{Primary 11G18; Secondary 14G35, 11G50}

\keywords{Zilber--Pink conjecture, modular curves, unlikely intersections, G-functions, Lang--Trotter conjecture}
	\begin{abstract} 
		We show that Lang--Trotter-type sparsity for simultaneous supersingular reduction of pairs of elliptic curves provides a new arithmetic input for unlikely intersections in powers of the modular curve. Assuming such a sparsity statement, we prove two Zilber--Pink-type finiteness results for Hodge generic curves in $Y(1)^n$. The proof proceeds through height bounds obtained by applying the $G$-function method of Yves Andr\'e. 
		\end{abstract}
\maketitle
				
\section{Introduction}\label{section:intro}

Following work of P. Habegger and J. Pila, finiteness questions of Zilber--Pink type for curves in $Y(1)^n$ may be reduced to suitable height bounds. In the $G$-functions method of Yves Andr\'e, such height bounds are obtained by constructing global algebraic relations among values of $G$-functions. In this approach, a persistent difficulty is bounding the contribution of places at which the relevant elliptic curves have simultaneous supersingular reduction. The purpose of this paper is to isolate an arithmetic input that controls this contribution.

We show that a Lang--Trotter-type sparsity statement for primes of simultaneous supersingular reduction provides such an input. Under this conjectural assumption, we prove two finiteness results for intersections of curves with natural, explicit families of strongly special subvarieties in $Y(1)^n$. Here, following Habegger--Pila \cite{habeggerpila1}, a special subvariety is called strongly special if its defining equations involve no condition of the form $x_i=j_0$, where $x_1,\ldots,x_n$ denote the coordinate functions on $Y(1)^n$ and $j_0$ is a singular modulus.

We introduce the following notation for the special subvarieties considered in the main theorems. 

\begin{definition}\label{defnspecialsubs} 
For $n\geq 3$, let $I:=\{i_1,i_2,i_3\}\subset \{1\ld n\}$ be a subset with $\# I=3$, and let $N,N'\in\N$. 

$1$. We write $V(I;N,N')$ for the subvariety of $Y(1)^n$ defined by the equations $\Phi_N(x_{i_1},x_{i_2})=0$ and $\Phi_{N'}(x_{i_1},x_{i_3})=0$, where $\Phi_N(X,Y)$ denotes the $N$-th modular polynomial. 

$2$. Given another such subset $J\subset \{1\ld n\}$ with $I\cap J=\emptyset$ and $M,M'\in \N$, we define 
\begin{equation} 
	V(I,J;N,N',M,M'):=V(I;N,N')\cap V(J;M,M'). 
\end{equation} 

$3$. Given $j\in \{1\ld n\}\backslash I$ and $N''\in \N$, we define \begin{equation} V(I,j;N,N',N''):=V(I;N,N')\cap V(\Phi_{N''}(x_{i_1},x_j)). 
\end{equation}
\end{definition}

With this notation, the first finiteness result we establish is the following. 

\begin{theorem}\label{thmzpiny(1)n} 
	Let $S\subset Y(1)^n$, with $n\geq 4$, be a smooth irreducible curve defined over $\bar{\Q}$ that is not contained in a proper special subvariety of $Y(1)^n$. Let $I$ be a subset of $\{1\ld n\}$ with $\# I=3$. 
	
Assume that $S$ intersects a subvariety of the form $V(I;N,N')$, for some $N,N'\in\N$, at a Hodge generic point of that subvariety and that \Cref{conjsupersingLT} holds. Then the set 
\begin{equation}\label{eq:zpmainy(1)eq} 
\Sha(S;I):=\{ s\in S(\C):\exists j\notin I\text{, } \exists M,M',M''\in \N\text { with } s\in V(I,j;M,M',M'') \} 
\end{equation}
 is finite. 
	\end{theorem}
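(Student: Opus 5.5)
We will follow the Pila--Zannier strategy as adapted by Habegger--Pila to $Y(1)^n$: combine o-minimal point counting with a height bound and a Galois-orbit lower bound. We first make two reductions. The set $\Sha(S;I)$ is a finite union over $j\notin I$, so we fix $j$. Let $s\in \Sha(S;I)$, lying in $V(I,j;M,M',M'')$. Then the four coordinates $s_{i_1},s_{i_2},s_{i_3},s_j$ are pairwise isogenous. Since $S$ is not contained in a proper special subvariety, $S$ meets $V(I,j;M,M',M'')$ in finitely many points, because this subvariety has codimension $3$ and $S$ is Hodge generic. Hence every point of $\Sha(S;I)$ lies in $S(\bar{\Q})$.

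The heart of the argument is a height bound of the form $h(s)\ll [\Q(s):\Q]^{c}$ for $s\in\Sha(S;I)$, with $c$ a constant independent of $s$ and of the degrees $M,M',M''$. We obtain it with Andr\'e's $G$-function method. The hypothesis that $S$ meets some $V(I;N,N')$ at a Hodge generic point $s_0$ of that subvariety gives us a point where the elliptic curves $E_{i_1},E_{i_2},E_{i_3}$ are isogenous but have no extra endomorphisms. Near $s_0$ we take the family of elliptic curves $E_{i_1}\times E_{i_2}\times E_{i_3}\times E_j$ over a local parameter on $S$. Its relative periods, suitably normalized via the isogenies at $s_0$, give a family of $G$-functions. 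These are $G$-functions because there is a good degeneration, namely the isogenous triple, and because the Hodge genericity of $s_0$ ensures that the trivial relations are exactly those forced by $V(I;N,N')$.

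At a point $s$ of $\Sha(S;I)$, the extra isogeny $E_{i_1}\to E_j$, together with the isogenies within $I$, produces new polynomial relations among the values of these $G$-functions. We must check that these relations hold at every place of $\Q(s)$ that is close to $s_0$. At archimedean places this follows from the comparison of periods. At finite places of ordinary or non-simultaneously-supersingular reduction, it follows from the crystalline or Tate-module argument, which forces the isogeny relations $p$-adically. The remaining places are those where $E_{i_1}$ (and hence the whole isogeny class) and the reduction of $s_0$'s curves are simultaneously supersingular. At such places the $p$-adic relations may fail to be the desired ones. This is where \Cref{conjsupersingLT} enters. The sparsity of primes of simultaneous supersingular reduction for the pair $(E_{s_0},E_s)$ bounds the total local contribution of these places, measured by $\sum \log p$ over such primes weighted by the local degrees, by a small power of $[\Q(s):\Q]$ times $h(s)$, or by something absorbable into the main term. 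With this, Andr\'e's Hasse principle for $G$-function values yields a global relation, hence $h(s)\ll [\Q(s):\Q]^{c}$, provided the relation is nontrivial. Nontriviality uses that the relation is not implied by those at $s_0$, since $s\notin V(I;N,N')$-type loci at the generic level.

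Given the height bound, the rest is standard. By Masser--W\"ustholz type isogeny estimates, the degrees $M,M',M''$ are polynomially bounded in $h(s)$ and $[\Q(s):\Q]$. We then obtain Galois orbit lower bounds: the conjugates of $s$ again lie in $\Sha(S;I)$ with the same degrees, and $\#\mathrm{Gal}\cdot s\gg \max(M,M',M'')^{\delta}$ follows from the isogeny lower bounds when the remaining coordinates are non-CM. CM cases are handled by Habegger--Pila and are finite by André--Oort. We apply Pila--Wilkie to the definable set of elements $g\in \mathrm{GL}_2^+(\Q)^3$ relating $\tau_{i_1}$ to $\tau_{i_2},\tau_{i_3},\tau_j$ on the uniformized $S$ in a fundamental domain. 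This gives many rational points of height polynomial in the degree, so for large degree we get a semialgebraic curve. Ax--Schanuel, or the Ax--Lindemann theorem for $j$, then forces $S$ into a proper special subvariety, a contradiction. Hence the degrees are bounded, and by Northcott the set $\Sha(S;I)$ is finite.

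The main obstacle is the supersingular step. We must show that at simultaneously supersingular places the $G$-function relation still holds, or that these places contribute negligibly. The first thing we will try is to bound the number and degree of the bad primes by the conjecture and then absorb their contribution into the height-bound inequality as an error term.
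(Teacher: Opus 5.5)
Your overall architecture matches the paper's: a Habegger--Pila reduction to a polynomial height bound, the Andr\'e--Bombieri Hasse principle, and Lang--Trotter input at supersingular places. The gap is in the supersingular step, which you flag as the main obstacle and which is the entire new content of the proof.

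\textbf{The conjecture is applied to the wrong pair.} You apply it to $(E_{s_0},E_s)$. That pair varies with $s$, so the polynomial $P_{E,E'}$ gives no bound uniform in $s$. Moreover, at a place of proximity $E_s$ and $E_{s_0}$ have the same reduction, so their simultaneous supersingularity carries no information.

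The missing idea is a transfer to a \emph{fixed} pair at $s_0$. Let $v$ be a finite place at which $s$ is $v$-adically close to $s_0$. Then $\CE_{k,s}$ and $E_k:=\CE_{k,s_0}$ have the same reduction modulo $v$ for every $k$. Since $\CE_{i_1,s}\sim\CE_{j,s}$, and isogenies preserve the ordinary/supersingular/bad trichotomy, $E_{i_1}$ supersingular at $v$ forces $E_j$ supersingular at $v$. Hence the supersingular places of proximity lie over primes of simultaneous supersingular reduction of $(E_{i_1},E_j)$. This pair is independent of $s$, and it is non-CM and non-isogenous precisely because $s_0$ is a Hodge generic point of $V(I;N,N')$. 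This is the only role of the fourth index $j$. The relations themselves come from the triple $I$ alone, via the Daw--Orr--Papas local factors, not from new relations produced by the isogeny to $\CE_{j,s}$.

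\textbf{The accounting does not work.} You suggest the relations may fail at supersingular places and that their contribution, measured by $\sum\log p$, can be absorbed as an error term. But Theorem~\ref{thmandbomb} needs the relation at \emph{every} place of proximity. The contribution of such a place to $h(x(s))$ is $\log^+|x(s)|_v^{-1}$, which $\log p$ only bounds from below; if you drop places, the height can concentrate exactly there.

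What is actually used is that a local factor exists at every place of proximity:
\begin{itemize}
\item a single $R_{s,\ord}$ of degree $\le 4$ serving all ordinary places at once;
\item a $v$-dependent $R_{s,v}$ of degree $\le 2$ at each archimedean, bad or supersingular place.
\end{itemize}
Their product is a global relation. It is non-trivial because the ideal $(\det(X_{i,j,k})-1)$ is prime, governs the trivial relations by Hodge genericity of $S$, and contains none of the factors. Your reason, that $s$ avoids $V(I;N,N')$-type loci, is not what gives non-triviality.

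The conjecture then only bounds the \emph{number} of supersingular places, and hence $\deg R_s$:
\begin{itemize}
\item Primes below places of proximity satisfy $\sum\log p\le[K(s):\Q]h(x(s))$, so each is at most $\exp([K(s):\Q]h(x(s)))$.
\item The conjecture for $(E_{i_1},E_j)$ then gives $\pi_K(s)$ polynomial in $[K(s):\Q]$ and $\log([K(s):\Q]h(x(s)))$.
\item Hence $\deg R_s\ll[K(s):\Q]^{D+1}(\log h(x(s)))^{D}$.
\item Combining with $h(x(s))\le c_1(\deg R_s)^{c_2}$ gives $h/(\log h)^{Dc_2}\ll[K(s):\Q]^{c}$, which yields the polynomial bound.
\end{itemize}

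As a minor point, the entries of $Y_G$ are $G$-functions because they are normalized solutions of the Gauss--Manin connection at the good point $s_0$. The isogenous triple plays no role in that.
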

	
\begin{remarks} 
$1$. By a ``Hodge generic point'' of a special subvariety $Z\subset Y(1)^n$ we mean a point that does not belong to any special subvariety of $Y(1)^n$ of dimension smaller than that of $Z$. The special subvarieties of $Y(1)^n$ are defined by equations of the form $\Phi_M(x_i,x_j)=0$ and equations of the form $x_i=j_0$, where $j_0$ is a fixed singular modulus. 
	
$2$. The Zilber--Pink conjecture predicts, in particular, that there are only finitely many points of intersection of $S$ with subvarieties of the form $V(I;N,N')$. Without additional hypotheses on $S$, this part of the conjecture remains open. For more on this, see the discussion in \Cref{section:summary}. 
\end{remarks}

The second finiteness result we establish concerns intersections with strongly special subvarieties corresponding to two disjoint triples of indices. 
\begin{theorem}\label{thmzpiny(1)n2} Let $S\subset Y(1)^n$, with $n\geq 6$, be a smooth irreducible curve defined over $\bar{\Q}$ that is not contained in a proper special subvariety of $Y(1)^n$. Let $I$ and $J$ be disjoint subsets of $\{1\ld n\}$ with $\# I=\# J=3$. 
	
Assume that $S$ intersects a subvariety of the form $V(I,J;M_0,M_0',N_0,N_0')$ at a Hodge generic point of that subvariety and that \Cref{conjsupersingLT} holds. Then the set 
\begin{equation}\label{eq:zpmainy(1)eq2} 
\Sha(S;I,J):=\{ s\in S(\C): \exists M,M',N,N'\in \N\text { with } s\in V(I,J;M,M',N,N') \} 
\end{equation} 
is finite. 
\end{theorem}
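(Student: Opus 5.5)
We follow the Habegger--Pila strategy, using the Pila--Wilkie counting theorem, and reduce \Cref{thmzpiny(1)n2} to a height bound together with a Galois-orbit lower bound. First we show that every $s\in\Sha(S;I,J)$ has Weil height $h(s)\ll [\Q(s):\Q]^{c}$. The implied constant depends only on $S$. We also need a polynomial bound in terms of the isogeny degrees $M,M',N,N'$.

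Given such a bound, the Galois conjugates of $s$ lie in the same union of strongly special subvarieties. The isogeny-estimate lower bounds of Masser--W\"ustholz and Habegger--Pila then give
\[
\#\mathrm{Gal}(\bar{\Q}/\Q)\cdot s\gg \max(M,M',N,N')^{\delta}.
\]
Next we use the o-minimal definability of the uniformization $j$ restricted to a fundamental domain. Pila--Wilkie then produces a semialgebraic curve in the preimage of $S$. By Ax--Schanuel (Pila--Tsimerman), this contradicts the assumption that $S$ is Hodge generic, unless the degrees stay bounded. Bounded degrees give finitely many subvarieties $V(I,J;M,M',N,N')$. Each meets $S$ in a finite set, since $S$ is not contained in a proper special subvariety and these have codimension $4\ge 2$.

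The heart of the proof is the height bound, which we obtain by Andr\'e's $G$-function method. We choose the given Hodge generic point $s_0\in S\cap V(I,J;M_0,M_0',N_0,N_0')$ as a point of degeneration of a suitable model. After a base change of $S$ and a change of coordinates, we regard $s_0$ as lying in the boundary of a compactification $\bar S$ where the family $\CE_{i}$ degenerates in a controlled way. The simplest option is to work with a curve $S'\to S$ on which all six elliptic curves indexed by $I\cup J$ acquire suitable level structure near a cusp. Hodge genericity of $s_0$ in $V(I,J;\dots)$ makes the relations at $s_0$ exactly the isogenies among $E_{i_1},E_{i_2},E_{i_3}$ and among $E_{j_1},E_{j_2},E_{j_3}$, with no CM.

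The relative periods of $\CE_i\to S'$ around the degeneration point are $G$-functions. At a point $s$ with isogenies $E_{i_1}\sim E_{i_2}\sim E_{i_3}$ and $E_{j_1}\sim E_{j_2}\sim E_{j_3}$, the period matrices satisfy new quadratic relations: the determinantal $2\times2$ relations between period vectors of isogenous curves. These exceed the trivial relations by enough, two extra isogenies per triple and four in total, to show that the ideal they cut out is nontrivial modulo the ideal of the trivial relations. The trivial relations are those holding generically, determined by the Mumford--Tate group, which is $\SL_2^{6}$ on $I\cup J$ by Hodge genericity of $S$.

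We then must make the relation \emph{global}, that is, valid at every place $v$ of $\Q(s)$ where $s$ is $v$-adically close to $s_0$. At archimedean places this follows from comparison with the Betti lattice. At finite places of ordinary reduction, one of the curves is ordinary and the isogenies preserve the unit-root crystal, which provides the $p$-adic analogue of the relation. The main obstacle is the finite places where all curves in a triple have simultaneous supersingular reduction, since there the crystalline argument gives no relation. Here we invoke \Cref{conjsupersingLT}. Since the curves in each triple are isogenous, simultaneous supersingularity of the triple reduces to that of one pair of non-isogenous curves, one from $I$ and one from $J$. The conjecture bounds the number, and hence the total local contribution, of such primes by $o$ of the height, or more precisely by $\ll [\Q(s):\Q]\log h(s)$-type quantities. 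These places can then be excluded at the cost of a weakened but still polynomial bound in Andr\'e's Hasse principle.

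Applying Andr\'e's theorem with this degree of the relation gives $h(s)\ll [\Q(s):\Q]^{c}$, completing the height bound and hence the proof. Making the contribution of the supersingular places precise, and in particular checking that the conjecture's count suffices in the Hasse principle, is the step I expect to require the most care.
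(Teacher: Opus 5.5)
Your outer reduction (height bound, then Masser--W\"ustholz and o-minimal counting in the style of Habegger--Pila) matches what the paper does by citation. You also correctly single out the non-CM, non-isogenous pair $(E_{i_1},E_{j_1})$ as the one to which \Cref{conjsupersingLT} is applied. The height bound, however, has genuine gaps.

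First, $s_0$ cannot be turned into a point of degeneration. It lies on $S\subset Y(1)^n$, its fibers are smooth non-CM elliptic curves with fixed algebraic $j$-invariants, and no base change or change of coordinates alters this. The $G$-functions must be centered at the smooth point $s_0$ itself. These are the normalized matrices $Y_{\CE_k}\in\SL_2(K[[x]])$ of \Cref{section:backgroundgfuns}, for which $s$ is $v$-adically close to $s_0$ exactly when $\CE_s$ and $\CE_{s_0}$ have the same reduction at $v$. This is not cosmetic. The ordinary/supersingular dichotomy you invoke later only makes sense for the good reduction of the central fiber at $s_0$; a degenerate fiber is a Tate curve. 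Centering instead at a genuine cusp of $S$ would require the boundary hypotheses of Daw--Orr and Papas, which are not assumed here.

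Second, the place-by-place mechanism you describe would not work. \Cref{thmandbomb} needs the relation at \emph{every} place of proximity, so supersingular places cannot be ``excluded''. Their number does not control their contribution $\log|x(s)|_v^{-1}$ to the height, which may be arbitrarily large at a single place. Nor is it true that there is no relation there. \Cref{localfactorsprop}(1) gives, for a triple isogenous both at $s_0$ and at $s$, a local factor $R_{s,v}$ of degree at most $2$ at every archimedean and every non-ordinary (supersingular or bad) place of proximity; the only problem is that it depends on $v$.

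The idea missing from your proposal is \Cref{localfactorsprop}(2): at ordinary places a \emph{single} factor $R_{s,\mathrm{ord}}$ of degree at most $4$, independent of $v$, holds at all of them at once. Without this, the potentially very numerous ordinary places would already destroy any useful degree bound. With it, the paper's argument runs as follows.
\begin{enumerate}
\item Take $R_s$ to be the product of the $R_{s,v}$ over archimedean, supersingular and bad places, times $R_{s,\mathrm{ord}}$ and the ordinary factor $Q_{s,\mathrm{ord}}$ of the triple $J$. This covers every place where $E_{i_1}$ or $E_{j_1}$ is ordinary.
\item $R_s$ is non-trivial, since no factor lies in the prime ideal $(\det(X_{i,j,k})-1)$.
\item Hence $\deg R_s\le 2(c_{\mathrm{bad}}+1+\pi_K(s))[K(s):\Q]+8$, where $\pi_K(s)$ counts the places where both $E_{i_1}$ and $E_{j_1}$ are supersingular.
\item Each such place lies over a prime $p\le\exp([K(s):\Q]h(x(s)))$, so \Cref{conjsupersingLT} bounds $\pi_K(s)$ polynomially in $[K(s):\Q]$ and $\log h(x(s))$.
\item \Cref{thmandbomb} then gives $h(x(s))\le c_1(\deg R_s)^{c_2}$, and the $\log h$ factor is absorbed.
\end{enumerate}
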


The arithmetic input used in both theorems is the following weak form of the Lang--Trotter conjecture for pairs of elliptic curves.
\begin{conj}[Conjecture on simultaneous supersingular reduction; \cite{dawpap}]\label{conjsupersingLT}
	Let $E$ and $E'$ be two elliptic curves defined over a number field $K$. Assume that $E$ and $E'$ do not acquire CM and are not isogenous to one another over $\bar{\Q}$. Consider the set
	\begin{equation}
		\Sigma(E,E')_{\ssing}:=\{v\in \Sigma_{K,f}: E\text{ and }E'\text{ are supersingular modulo }v\},
	\end{equation}
	and, for $x\geq 3$, define
	\begin{equation}
		\pi_{E,E'}(x):=\#\{p\leq x: \exists v\in \Sigma(E,E')_{\ssing} \text{ with } v|p\}.
	\end{equation}
	Then there exists a polynomial $P_{E,E'}(U,V)$, whose degree and coefficients depend only on $E$ and $E'$, such that, for all sufficiently large $x$,
	\[
	\pi_{E,E'}(x)\leq P_{E,E'}([K:\Q],\log\log x).
	\]
\end{conj}

\begin{remark}
	When $K=\Q$, \Cref{conjsupersingLT} is a slightly weaker form of a special case of the Lang--Trotter conjecture for pairs of elliptic curves. For pairs of elliptic curves defined over $\Q$, this conjecture first appears as Remark $2$ on page $37$ of the work of S. Lang and H. Trotter \cite{langtrotter}. 
	
	The natural expectation, based on heuristics from the Sato--Tate conjecture, is that $\pi_{E,E'}(x)$ behaves asymptotically like $C_{E,E',K}\log\log x$, as $x\rightarrow\infty$, for some positive constant $C_{E,E',K}$ depending on $E$, $E'$, and $K$. In this sense, \Cref{conjsupersingLT} may be viewed as a slightly weaker version of Lang--Trotter in this setting. For further details, see \cite{dawpap}.
	
	The above conjecture seems to have first been studied over $\Q$ by E. Fouvry and M. R. Murty; see \cite{fouvrymurty}, where they prove an average version. A conjectural description, again for pairs over $\Q$, of the aforementioned constant $C_{E,E'}$ is provided by A. Akbary and J. Parks in \cite{akbaryparks}, who describe a probabilistic model in the spirit of Lang--Trotter \cite{langtrotter}.
\end{remark}

Habegger and Pila \cite{habeggerpila1} studied Zilber--Pink for curves in $Y(1)^n$ via the Pila--Zannier strategy, and established the part of the conjecture involving special subvarieties that are not strongly special. Further work on Zilber--Pink in powers of the modular curve \cite{daworr4,daworrpap,papaszpy1} has produced several new cases in which the required height bounds are obtained through Andr\'e's $G$-function method. In these works, the contribution of finite places to the height is controlled using the way in which the curve meets the boundary $X(1)^n\backslash Y(1)^n$. 

The present paper is complementary to this circle of ideas: the finite-place contribution is controlled by isolating a different arithmetic obstruction, namely simultaneous supersingular reduction, and showing that Lang--Trotter-type sparsity for this obstruction supplies the height bounds needed for the finiteness results above. A related use of Lang--Trotter-type sparsity in unlikely intersections appears in joint work of the author with C. Daw in the setting of $\mathcal A_3$ \cite{dawpap}. The relevant Zilber--Pink background and the reduction to height bounds are recalled in \Cref{section:summary}.

\subsection{Outline of the paper} 

In \Cref{section:gfunsmethod} we recall the relevant Zilber--Pink background, explain the reduction to height bounds, and set up the $G$-functions method used in the proof. In \Cref{section:proofsandmore} we prove the height bounds needed for \Cref{thmzpiny(1)n,thmzpiny(1)n2}. We conclude in \Cref{section:speculation} by discussing the broader mechanism suggested by the proof, relating Lang--Trotter-type sparsity, supersingular reduction, and the $G$-function approach to unlikely intersections.

\subsection{Notation}
Given two elliptic curves $E$ and $E'$ over a field, we write $E\sim E'$ to mean that $E$ and $E'$ are isogenous over an algebraic closure of that field.

Let $\vec{Y}:=(y_1\ld y_N)\in K[[x]]^N$ be a family of power series, where $K$ is a number field, and let $v\in \Sigma_K$ be a place of $K$. We write $R_v(y_j)$ for the $v$-adic radius of convergence of $y_j$, and set
\[
R_v(\vec{Y}):=\min_j R_v(y_j).
\]

For such a place $v$, we write $\iota_v:K\hookrightarrow \C_v$ for the associated embedding, where $\C_v$ denotes either $\C$ or $\C_p$ according as $v$ is archimedean or non-archimedean. Finally, if $y(x)=\Sum{n=0}{\infty}a_n x^n\in K[[x]]$, we write
\[
\iota_v(y(x)):=\Sum{n=0}{\infty}\iota_v(a_n)x^n
\]
for the corresponding power series in $\C_v[[x]]$.

\section{Reduction to the $G$-functions method}\label{section:gfunsmethod} 

The purpose of this section is to pass from the finiteness statements of \Cref{thmzpiny(1)n,thmzpiny(1)n2} to the height bounds proved later by the $G$-functions method of Yves Andr\'e. We first recall the relevant form of the Zilber--Pink conjecture for curves in $Y(1)^n$ and the Habegger--Pila reduction to height bounds. We then state the two height bounds needed in the present paper in \Cref{section:theheightbounds}. In \Cref{section:andrebomb} we recall the Andr\'e--Bombieri Hasse principle. Finally, in \Cref{section:backgroundgfuns}, we describe the construction associating a family of $G$-functions to a tuple of elliptic schemes over a curve defined over a number field.

\subsection{Background on Zilber--Pink in $Y(1)^n$}\label{section:summary}

For curves in $Y(1)^n$, the Zilber--Pink conjecture takes the following form.
 \begin{conj}[Zilber--Pink for curves in $Y(1)^n$]\label{zilberpinkconj} Let $S\subset Y(1)^n$ be an irreducible curve that is not contained in a proper special subvariety of $Y(1)^n$. Then the set 
 \begin{equation*} 
S(\C)\cap \displaystyle\bigcup_{\cd Z\geq 2}Z(\C)
\end{equation*} 
is finite, where the union ranges over all special subvarieties of $Y(1)^n$ of codimension at least $2$. \end{conj}

The above problem was first studied by P. Habegger and J. Pila in \cite{habeggerpila1} via the Pila--Zannier strategy. The special subvarieties of $Y(1)^n$ are defined by systems of equations of the form $x_i=j_0$, for a fixed singular modulus $j_0$, and $\Phi_M(x_i,x_j)=0$, where $x_1,\ldots,x_n$ denote the coordinates of $Y(1)^n$. Following Habegger--Pila, we call a special subvariety \textbf{strongly special} if it can be defined without using equations of the form $x_i=j_0$. 

Habegger and Pila prove the following part of \Cref{zilberpinkconj}. \begin{theorem}[Habegger--Pila \cite{habeggerpila1}] Let $S$ be as in \Cref{zilberpinkconj} and assume that it is also defined over $\bar{\Q}$. Then the set \begin{equation} S(\C)\cap \displaystyle\bigcup_{\underset{Z\text{ not strongly special }}{\cd Z\geq 2}}Z(\C) \end{equation} is finite. \end{theorem}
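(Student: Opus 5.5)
This is the Habegger--Pila theorem quoted from \cite{habeggerpila1}, so I will follow the Pila--Zannier strategy. The idea is to split the non-strongly-special special subvarieties according to which coordinates are forced to be CM. Let $Z$ have codimension at least $2$ and not be strongly special, so some equation $x_i=j_0$ with $j_0$ a singular modulus occurs in its definition. For $s\in S\cap Z$, let $C(s)$ be the set of indices $i$ with $x_i(s)$ a singular modulus. By assumption $C(s)$ is nonempty.

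\textbf{Step 1: reduce to height bounds.} Since $S$ is not contained in a proper special subvariety, no coordinate $x_i$ is constant on $S$. Hence the condition $x_i(s)=j_0$ forces $s\in S(\bar{\Q})$. Moreover, $[\Q(s):\Q]$ is bounded below by a power of $|\Delta(j_0)|$ (Siegel--Brauer), up to a bounded factor $\deg(x_i|_S)$. Along $S$, every other coordinate $x_k$ is algebraic over $\Q(x_i)$ with degree bounded independently of $s$. Moreover, $h(x_k(s))\ll h(x_i(s))+1$ by functoriality of heights on the curve. The height of a singular modulus satisfies $h(j_0)\ll |\Delta|^{\epsilon}$ (Colmez/Nakkiran, or Habegger's bound). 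Hence $h(s)\ll_\epsilon |\Delta|^{\epsilon}$, and $|\Delta|\ll [\Q(s):\Q]^{2+\epsilon}$. Thus heights are polynomially, even subpolynomially, bounded in the degree.

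\textbf{Step 2: exploit the codimension.} Since $\cd Z\geq 2$ and $S$ is a curve, the other condition defining $Z$ is either a second CM coordinate $x_k=j_1$ or an isogeny relation $\Phi_M(x_a,x_b)=0$. In the first case the point lies in a fibre of the map to $Y(1)^2$ over a pair of CM points. In the second case we also need to bound $M$ polynomially in $[\Q(s):\Q]$. For this I would use the isogeny estimates of Masser--Wüstholz (Gaudron--Rémond). The elliptic curves $E_{x_a(s)}$ and $E_{x_b(s)}$ are defined over $\Q(s)$ and have height controlled by Step 1, so a minimal isogeny has degree $\ll [\Q(s):\Q]^{c}$.

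\textbf{Step 3: o-minimal counting.} Uniformize via $j:\mathbb{H}^n\to Y(1)^n$ and restrict to a fundamental domain, giving a definable set $\tilde S$ in $\mathbb{R}_{an,\exp}$. The preimage of $s$ together with its Galois conjugates gives at least $[\Q(s):\Q]/c$ points of $\tilde S$ lying on translates of CM points and $\mathrm{GL}_2^+(\Q)$-graphs. Their height is polynomial in $[\Q(s):\Q]$: the quadratic imaginary CM coordinates have height bounded by $|\Delta|$, and the isogeny matrices have height bounded by $M$. The Pila--Wilkie theorem, in its family version over the parameter set of (CM point, matrix) data, then forces these points to lie on a positive-dimensional semialgebraic block for large degree. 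Ax--Lindemann (Pila) says such blocks come from weakly special subvarieties containing $S$, which is impossible since $S$ is Hodge generic. This bounds the degree, and Northcott then gives finiteness.

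\textbf{Main obstacle.} The hardest point is Step 2 in the mixed case. There the isogeny is between coordinates other than the CM one, and I must make sure the Galois-orbit lower bound still comes from $|\Delta|$ while $M$ stays polynomially bounded. Here the Masser--Wüstholz dependence on $h(s)$ is essential, and I would handle it first, using Step 1's bound $h(s)\ll|\Delta|^\epsilon$.
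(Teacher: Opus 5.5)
You follow the Habegger--Pila strategy that the paper quotes without proof: height bound from the CM coordinate, Siegel's lower bound for the Galois orbit, Masser--W\"ustholz to bound $M$, then Pila--Wilkie plus functional transcendence. Steps 1--2 are essentially fine. Step 2 is in fact routine, not the main obstacle: $[\Q(s):\Q]$ is comparable to the class number of $\Delta$ up to a bounded factor, so $M$ and the heights of all parameters are polynomial in $|\Delta|$.

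The genuine gap is in Step 3, in the mixed case $x_i(s)=j_0$, $\Phi_M(x_a(s),x_b(s))=0$ with $i\notin\{a,b\}$. For fixed $\tau$ the set $\{g: g\tau_a=\tau_b\}$ is already a real $2$-dimensional algebraic set. So you need the Habegger--Pila form of Pila--Wilkie, which yields a definable path with $\tau(t)\in\tilde S$ non-constant, $\tau_i(t)$ and $g(t)$ semialgebraic, and $\tau_b(t)=g(t)\tau_a(t)$. Let $p$ be the projection to the coordinates $i,a,b$ and $\tilde C$ a branch of $j^{-1}(p(S))$. Complexifying the path gives only an identity $\tau_b=G(\tau_i)\cdot\tau_a$ on $\tilde C$, with $G$ an algebraic $\mathrm{GL}_2$-valued function. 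This is an algebraic \emph{surface} containing $\tilde C$, not an algebraic curve inside $j^{-1}(S)$. Ax--Lindemann gives no contradiction here: it only says the Zariski closure of the image of that surface under $j$ is weakly special, and this closure is $Y(1)^3$ because it contains $p(S)$.

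What you actually need is that $\tau_i,\tau_a,\tau_b$ are algebraically independent on $\tilde C$. This holds because the Zariski closure of $\tilde C$ is stable under the monodromy group. That group is Zariski dense in $\SL_2^3$ by Hodge genericity (Andr\'e's normality theorem, or Goursat's lemma), and $\SL_2(\mathbb{R})^3$ acts transitively on $\mathbb{H}^3$. Alternatively, use the Pila--Tsimerman Ax--Schanuel theorem. Only when two coordinates are CM does the block lie in $j^{-1}(p_{ik}(S))$ itself, so that complexification plus Ax--Lindemann works as you describe.

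Separately, two of your assertions are false as stated: that no coordinate is constant on $S$, and that the block is ``impossible since $S$ is Hodge generic''. A coordinate may be constant with a non-CM value. For instance, $\{c\}\times Y(1)\subset Y(1)^2$ with $c$ not a singular modulus is Hodge generic but weakly special, so Ax--Lindemann gives no contradiction for it. This is repairable, but the cases must be handled:
\begin{enumerate}
\item The CM coordinate $x_i$ is necessarily non-constant.
\item A Hodge generic weakly special curve meets no non-strongly-special $Z$ of codimension $\geq 2$.
\item If, say, $x_a$ is constant, then $\tau_b(t)=g(t)\tau_a$ is itself semialgebraic, and you reduce to the two-coordinate Ax--Lindemann argument for the projection to $(x_i,x_b)$.
\end{enumerate}
The monodromy argument above should then be applied only to projections with no constant coordinates.
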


The open part of \Cref{zilberpinkconj} is therefore the finiteness of the strongly atypical locus of $S$, namely the set 
\begin{equation}
	\atyp(S)_{\strspec}:=S(\C)\cap \displaystyle\bigcup_{\underset{Z\text{ strongly special }}{\cd Z\geq 2}}Z(\C).
\end{equation}

In this direction, the conjecture is known for curves defined over $\C\setminus \bar{\Q}$ by work of J. Pila \cite{pilagen}. For curves defined over $\bar{\Q}$, the known results impose additional hypotheses on $S$. 
\begin{theorem} Let $S\subset Y(1)^n$ be as in \Cref{zilberpinkconj} and assume that $S$ is defined over $\bar{\Q}$. Then the set $\atyp(S)_{\strspec}$ is finite in each of the following cases: 
	\begin{enumerate} \item $S$ is asymmetric \cite{habeggerpila1}; 
		\item $S$ intersects the boundary of the compactification of $Y(1)^n$ at the point $\infty^n$ \cite{daworr4}; 
		\item $S$ intersects the boundary of the compactification of $Y(1)^n$ at a point all of whose coordinates are either $\infty$ or singular moduli \cite{papaszpy1}; 
		\item $n=3$ and $S$ intersects the boundary of the compactification of $Y(1)^n$ at a ``modular point'' \cite{daworrpap}. \end{enumerate} \end{theorem}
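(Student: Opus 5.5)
The statement collects four results from the literature, so the plan is to deduce each case from the cited work, after checking that the hypotheses there match those of \Cref{zilberpinkconj} with $S$ over $\bar{\Q}$. All four follow one template: the Pila--Zannier strategy as set up by Habegger--Pila. By the Habegger--Pila theorem above, only $\atyp(S)_{\strspec}$ needs treatment. For a point $s\in\atyp(S)_{\strspec}$, the Habegger--Pila machinery (Pila--Wilkie counting on the definable set given by the uniformization $\mathbb{H}^n\to Y(1)^n$, together with the Ax--Schanuel/Ax--Lindemann functional transcendence input) reduces finiteness to two arithmetic inputs. The first is a uniform height bound $h(s)\ll 1$ for $s\in\atyp(S)_{\strspec}$. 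The second is a Galois lower bound $[\Q(s):\Q]\gg \max(\text{isogeny degrees})^{\delta}$, which is derived from the height bound via isogeny estimates of Masser--W\"ustholz type. So in each case the point to verify is that the cited paper supplies the height bound.

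In case (1) this is exactly the main theorem of \cite{habeggerpila1}. Asymmetry of $S$ is what lets them bound $h(s)$ at strongly special intersections, using the degree comparison of the coordinate functions together with the height comparisons $h(j)\approx h(j')$ for isogenous $j,j'$, up to $O(\log N)$. In cases (2)--(4) the height bound comes instead from Andr\'e's $G$-function method, in the form recalled later in \Cref{section:gfunsmethod}. One takes a boundary point $s_0$ of $\bar S$, i.e.\ the point $\infty^n$, a point with coordinates in $\{\infty\}\cup\{\text{singular moduli}\}$, or a modular point when $n=3$. Near $s_0$, relative periods of the elliptic schemes give $G$-functions. At an atypical point $s$, the isogenies force nontrivial polynomial relations among their values, and these relations are global by the Andr\'e--Bombieri Hasse principle. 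This yields $h(s)\ll [\Q(s):\Q]^{c}$, which suffices for the counting argument.

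The main obstacle, in each of \cite{daworr4,papaszpy1,daworrpap}, is the globality of the relations at finite places, i.e.\ controlling the $v$-adic contribution. I would reproduce that step by noting that proximity of $s$ to the chosen boundary point, at every place where $s$ is $v$-adically close to $s_0$, forces multiplicative or CM-type degeneration. This makes the $v$-adic period relations "trivial" in the sense required by the Hasse principle. In particular, no supersingular-reduction issue arises there, which is precisely the contrast drawn with \Cref{conjsupersingLT} in the introduction. Since these steps are carried out in full in the cited papers, the proof of the statement consists of invoking them case by case.
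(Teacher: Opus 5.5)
Your proposal matches the paper, which gives no argument beyond attributing each case to \cite{habeggerpila1}, \cite{daworr4}, \cite{papaszpy1} and \cite{daworrpap}; invoking those results case by case is exactly what the paper does. Two slips in your sketch do not affect this but should be fixed: the Habegger--Pila reduction requires only $h(s)\leq c_0[\Q(s):\Q]^{c_1}$ (as in \Cref{heightboundintro}), not a uniform bound $h(s)\ll 1$, and globality of the relations is the hypothesis of \Cref{thmandbomb}, not a consequence of it.
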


The results of this paper give two finiteness statements, under \Cref{conjsupersingLT}, for explicit families of strongly special intersections. More precisely, \Cref{thmzpiny(1)n} proves finiteness for the intersections in which one fixed triple is enlarged by an additional modular relation, while \Cref{thmzpiny(1)n2} proves finiteness for intersections involving two disjoint triples. In particular, \Cref{thmzpiny(1)n2} establishes the finiteness of the set of points of intersection of $S$ with the countably infinite union of strongly special subvarieties 
\begin{equation*} 
	\displaystyle\bigcup_{M,M',N,N'\in \N}V(I,J;M,M',N,N')(\C). 
\end{equation*}

For general background on the Zilber--Pink conjecture, we refer the reader to \cite{pilabook}. For the specific case of $Y(1)^n$, see \cite{daworr4,daworrpap}.

\subsection{From Zilber--Pink to height bounds}\label{section:fromzpheight} 

Following \cite{habeggerpila1}, \Cref{zilberpinkconj} is reduced to lower bounds for the sizes of Galois orbits of points of intersection of a curve $S$ with special subvarieties of codimension at least $2$. 

Habegger and Pila further show that, using the isogeny estimates of Masser--W\"ustholz \cite{masserwuisogellcurves}, these lower bounds follow from suitable upper bounds for the Weil heights of the points in question.
\subsection{The height bounds}\label{section:theheightbounds} 

For the family of intersections appearing in \Cref{thmzpiny(1)n}, the required height bound, established in \Cref{section:htboundproof1}, is the following.
\begin{theorem}\label{heightboundintro}
	Let $S\subset Y(1)^n$ and $I\subset\{1,\ldots,n\}$ be as in \Cref{thmzpiny(1)n}, and let $s_0\in S(\bar{\Q})$ be a Hodge generic point of intersection as in the hypothesis of \Cref{thmzpiny(1)n}. Let $h$ be a Weil height on $S$ and assume that \Cref{conjsupersingLT} holds. 
	
	Then there exist positive constants $c_0$, $c_1$ such that for all points $s$ in the set $\Sha(S;I)$ we have
	\begin{equation}h(s)\leq c_0 [\Q(s):\Q]^{c_1}.\end{equation}\end{theorem}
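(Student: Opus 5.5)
The plan is Andr\'e's $G$-functions method, centred at the Hodge generic point $s_0$ rather than at a boundary point.

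\textbf{Setup at $s_0$.} After a finite base change, take a curve $S'\to S$ over a number field $K$ that carries elliptic schemes $\CE_1\ld \CE_n\to S'$ realising the coordinates $x_1\ld x_n$. Choose a local parameter $x$ on $S'$ with $x(s_0)=0$.

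Since $s_0$ is a Hodge generic point of $V(I;N,N')$, the fibres at $s_0$ satisfy:
\begin{itemize}
\item $E_{0,i_1}\sim E_{0,i_2}\sim E_{0,i_3}$;
\item none of these, nor any $E_{0,j}$ with $j\notin I$, is CM;
\item for $j\notin I$, the curve $E_{0,j}$ is not isogenous to $E_{0,i_1}$, nor to any other coordinate.
\end{itemize}

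Fix a basis of $H^1_{dR}$ of each $\CE_k$ near $s_0$ and a basis of horizontal sections. Expanding the relative period matrices in $x$ gives a family $\vec{Y}\in K[[x]]^{4n}$. This family consists of $G$-functions: it is the Taylor expansion at a nonsingular point of solutions of the Gauss--Manin connection, which is a $G$-operator. The construction is the one described in \Cref{section:backgroundgfuns}.

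\textbf{Relations at the good places.} Let $s\in\Sha(S;I)$, with the extra index $j$ and isogeny degrees $M,M',M''$. Then $E_{s,i_1},E_{s,i_2},E_{s,i_3},E_{s,j}$ are all isogenous.

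At an archimedean place $v$ with $|x(s)|_v<R_v(\vec{Y})$, the isogeny $E_{s,i_1}\to E_{s,j}$ gives a polynomial relation among the values $\iota_v(y_k)(x(s))$. Concretely, the period lattices become commensurable. Two features of this relation matter:
\begin{itemize}
\item its degree is bounded independently of $M''$, after normalising by rational matrices;
\item it is not the specialisation of a generic relation, because on $S$ the families $\CE_{i_1}$ and $\CE_j$ are not isogenous and the monodromy is as large as possible.
\end{itemize}
The second point is where the assumption that $S$ lies in no proper special subvariety is used, via the usual Andr\'e monodromy (Zariski closure of the relations) argument.

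At a finite place $v$ with $s$ $v$-adically close to $s_0$, I would argue as follows. Suppose at least one of $E_{0,i_1}$, $E_{0,j}$ has ordinary (or multiplicative) reduction at $v$. Then the $v$-adic periods, meaning crystalline/Frobenius-horizontal sections, together with Serre--Tate theory or the unit-root splitting, turn the isogeny into the same kind of relation. For the pairs inside $I$, the relations already hold at $s_0$ and persist near it, so they cost nothing.

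\textbf{Applying the Hasse principle and the main obstacle.} One then applies the Andr\'e--Bombieri Hasse principle of \Cref{section:andrebomb}. The only places where we lack a relation yet $s$ is $v$-adically close to $s_0$ are those $v\mid p$ at which $E_{0,i_1}$ and $E_{0,j}$ are simultaneously supersingular. Each such place contributes at most about $[K(s):\Q]\log p$ to $h(1/x(s))$, or to the relevant part of the height inequality.

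Controlling these exceptional places is the main obstacle. I would handle it with \Cref{conjsupersingLT}, applied to the finitely many pairs $(E_{0,i_1},E_{0,j})$ over $K$ (at most $n-3$ of them); they are non-CM and non-isogenous, so the conjecture applies.

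The relevant primes divide a quantity whose size is bounded polynomially in $H:=\max(h(s),1)$ and $[K(s):\Q]$, namely the numerator of $x(s)$. Hence only primes $p\le \exp(c\,[K(s):\Q]H)$ matter. By the conjecture, applied over the field $K(s)$ (or over $K$, using that the number of places above $p$ is at most $[K(s):\Q]$), there are at most $P\bigl([K(s):\Q],\log([K(s):\Q]H)\bigr)$ such primes. Their total contribution is therefore
\[
\ll [K(s):\Q]^{c}\,P\bigl([K(s):\Q],\log([K(s):\Q]H)\bigr)\,\log\bigl([K(s):\Q]H\bigr),
\]
which is polylogarithmic in $H$. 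This error term can be absorbed into the Andr\'e--Bombieri inequality, giving $h(s)\le c_0[\Q(s):\Q]^{c_1}$.

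The delicate point I expect to need the most care is bounding the $\log p$ of the exceptional primes by $\log$ of the height, rather than by the height itself. It should be done by noting that $|x(s)|_v<1$ forces $v$ to divide the numerator of $x(s)$.
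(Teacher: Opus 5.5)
Your proposal has two genuine gaps, and both sit at the core of the argument.

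\textbf{Where the relations come from.} Your $G$-functions are centred at $s_0$, whose fibres have good, non-CM reduction. So at an archimedean place $v$ the period matrix of $\CE_{k,s}$ is, up to normalisation, $\iota_v(Y_k(x(s)))\,\Pi_{k,v}$, where $\Pi_{k,v}$ is the transcendental period matrix of $E_{0,k}$. An isogeny $\CE_{i_1,s}\to\CE_{j,s}$ then only yields $Y_j(x(s))\Pi_{j,v}=A\,Y_{i_1}(x(s))\Pi_{i_1,v}B$, with $A$ algebraic and $B$ rational. Since $E_{0,i_1}\not\sim E_{0,j}$, nothing lets you eliminate $\Pi_{i_1,v}$ and $\Pi_{j,v}$, so you do not get a polynomial with algebraic coefficients as \Cref{defrelations} requires.

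The relations must instead come from the triple $I$, by combining the isogenies among the $\CE_{i_k,s}$ with those among the $E_{0,i_k}$; this is \Cref{localfactorsprop}. Your remark that the relations inside $I$ ``hold at $s_0$ and persist near it, so they cost nothing'' is backwards. On a Hodge generic curve those isogenies do not persist, and they are exactly the source of every non-trivial relation.

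What makes the method work is uniformity at ordinary places. Where $E_{0,i_1}$ is ordinary, a single polynomial $R_{s,\ord}$ of degree $\leq 4$ works for all such $v$ at once. At archimedean, bad and supersingular places, the factor $R_{s,v}$ of degree $\leq 2$ depends on $v$. Your proposal produces no $v$-independent relation, so a product over the ordinary places of proximity would have uncontrolled degree.

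The extra index $j$ plays no part in building relations. It only constrains reduction types. At a place of proximity, $\CE_{i_1,s}\sim\CE_{j,s}$ have the same reductions as $E_{0,i_1}$ and $E_{0,j}$, so these two have the same reduction type. Hence the supersingular places of proximity are simultaneously supersingular for $(E_{0,i_1},E_{0,j})$, which is where \Cref{conjsupersingLT} enters.

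\textbf{The supersingular places.} You propose to drop these places from the Hasse principle at a cost of about $[K(s):\Q]\log p$ each, and then to bound $\log p$ by $\log H$. This fails on two counts.
\begin{enumerate}
\item \Cref{thmandbomb} requires the relation at every place of proximity. Even in an excluded-place variant, the cost of $v$ would be its local contribution $\log|x(s)|_v^{-1}$ to the height. That can be essentially all of $h(x(s))$, for instance when $x(s)$ is divisible by a single large prime.
\item Your ``delicate point'' is false. That $v$ divides the numerator of $x(s)$ only gives $\log p\leq[K(s):\Q]h(x(s))$. So your error term is linear in $H$ and cannot be absorbed.
\end{enumerate}

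The correct move is to keep these places and include the local factors $R_{s,v}$ of \Cref{localfactorsprop}(1) in the global relation. Each place of $K(s)$ then costs degree $\leq 2$, independently of $p$, so $\deg R_s\leq 2(c_{\bad}+1+\pi_K(s))[K(s):\Q]+4$. The estimate $p\leq\exp([K(s):\Q]h(x(s)))$ is used only as the range in \Cref{conjsupersingLT}. The conjecture then bounds the \emph{number} $\pi_K(s)$ polynomially in $[K(s):\Q]$ and $\log h(x(s))$. Finally, $h(x(s))\leq c_1(\deg R_s)^{c_2}$ lets you absorb the logarithm.
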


Similarly, for the family of intersections appearing in \Cref{thmzpiny(1)n2}, the corresponding height bound, established in \Cref{section:htboundproof2}, is the following. 
\begin{theorem}\label{heightbound2intro} 
Let $S\subset Y(1)^n$ and $I,J\subset\{1,\ldots,n\}$ be as in \Cref{thmzpiny(1)n2}, and let $s_0\in S(\bar{\Q})$ be a Hodge generic point of intersection as in the hypothesis of \Cref{thmzpiny(1)n2}. Let $h$ be a Weil height on $S$ and assume that \Cref{conjsupersingLT} holds. 
	
	Then there exist positive constants $c_0$, $c_1$ such that for all points $s$ in the set $\Sha(S;I,J)$ we have \begin{equation} h(s)\leq c_0 [\Q(s):\Q]^{c_1}.\end{equation}\end{theorem}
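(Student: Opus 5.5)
We follow André's $G$-functions method, centered at the Hodge generic point $s_0\in S\cap V(I,J;M_0,M_0',N_0,N_0')$. Write $E_1,\dots,E_n$ for the elliptic curves over $S$ given by the coordinates. At $s_0$ the curves $E_{i_1},E_{i_2},E_{i_3}$ are pairwise isogenous, and so are $E_{j_1},E_{j_2},E_{j_3}$. Hodge genericity forces these two isogeny classes to be non-CM and distinct from each other. It also forces $s_0$ to satisfy no further modular relation. Choose a local parameter $x$ of $S$ at $s_0$ over a number field $K$.

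The relative de Rham cohomology of the $E_k$, expressed in a basis adapted to the isogenies at $s_0$, gives a family $\vec{Y}$ of power series in $x$. We do not have degeneration at a boundary point, so instead we use Taylor expansions of the relative periods around a smooth point; this is the construction recalled in \Cref{section:backgroundgfuns}. Because of the isogenies at $s_0$, the period matrices of $E_{i_1},E_{i_2},E_{i_3}$ share a common $2\times2$ block up to the fixed isogeny matrices, and likewise for the $J$-triple. By Hodge genericity and André's theorem of the fixed part, the trivial relations among the entries of $\vec{Y}$ are exactly those coming from the Legendre relations and these two identifications.

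Now let $s\in \Sha(S;I,J)$ with $x(s)=\xi$ small, and set $L=K(s)$. At $s$ the $I$-triple is again mutually isogenous, via isogenies of degrees $M,M'$, and so is the $J$-triple. At each archimedean place $v$ where $|\xi|_v<R_v(\vec Y)$, these isogenies give $\mathbb{Z}$-linear relations, with coefficients controlled by $M,M'$, among the values at $\xi$ of the periods of the three curves. Combined with the $\mathrm{Sp}_2$-structure, they produce a polynomial relation among the entries of $\iota_v(\vec Y)(\xi)$. Following \cite{daworr4,papaszpy1}, we form one product polynomial over the embeddings, which gives a relation valid at all archimedean places where $\xi$ is $v$-adically close to $s_0$. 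This relation is non-trivial: having two independent triples increases the number of relations beyond what the trivial ones allow, which is why we need $n\ge6$ and two disjoint triples. Its degree is bounded polynomially in $[L:\Q]$; the Masser–Wüstholz isogeny estimates let us take $M,M'\ll [L:\Q]^{c}h(s)^{c}$.

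The main obstacle is globality at the finite places. The André–Bombieri Hasse principle of \Cref{section:andrebomb} requires the relation to hold at every place $v$ where $\xi$ is $v$-adically close to $0$. For a finite place $v$ where $s$ and $s_0$ are $v$-adically close, the reductions modulo $v$ agree. If $E_{i_1,s_0}$ has ordinary reduction, the isogeny at $s$ lifts to Serre–Tate canonical coordinates, and the Frobenius-crystal argument of \cite{dawpap} shows that the $p$-adic periods satisfy the same relation. The bad places are therefore those where both the $I$-class and the $J$-class of $s_0$ are supersingular. These classes are non-isogenous and non-CM, so \Cref{conjsupersingLT}, applied to $E_{i_1,s_0}$ and $E_{j_1,s_0}$ over $K$, bounds the number of such rational primes $p\le x$ by $P([K:\Q],\log\log x)$.

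We then adapt Andréâ€“Bombieri as follows. We discard the places above the bad primes, and we only need to discard those at which $\xi$ is close to $0$. Such places divide the numerator of $\xi$, so they lie over primes $p\le e^{c[L:\Q]h(s)}$. Their total contribution to $h(\xi)$ is at most $\sum \log p$ over at most $P([L:\Q],\log\log e^{c[L:\Q]h(s)})$ primes. Controlling this sum is the key technical step: it must stay sublinear in $h(s)$, of order $[L:\Q]^{c}\cdot(\text{polylog } h)\cdot\log h$, using that $v$-adic closeness at $p$ forces $\log|\xi|_v^{-1}\ge\log p$. Comparing this with the Hasse principle bound $h(\xi)\ll \delta^{c}$, where $\delta$ is the degree of the relation, gives $h(s)\ll [L:\Q]^{c}(\log h(s))^{c'}$. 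This yields $h(s)\le c_0[\Q(s):\Q]^{c_1}$.
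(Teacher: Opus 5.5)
Your framework largely matches the paper. You center at $s_0$, use a $v$-independent factor at places where $E_{i_1,s_0}$ or $E_{j_1,s_0}$ is ordinary, and apply \Cref{conjsupersingLT} to the non-isogenous non-CM pair $(E_{i_1,s_0},E_{j_1,s_0})$ together with $p\le e^{c[L:\Q]h}$. The step that fails is \emph{discarding} the simultaneously supersingular places and adapting the Andr\'e--Bombieri principle.

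If the relation is not known at some places $v$ with $|\xi|_v<r_v$, the product-formula argument only controls the part of $h(\xi)$ coming from the remaining places. You therefore get $h(\xi)\le c\delta^{c'}+h_{\mathrm{exc}}(\xi)$, where $h_{\mathrm{exc}}(\xi)=\frac{1}{[L:\Q]}\sum_{v\ \mathrm{discarded}}[L_v:\Q_p]\log|\xi|_v^{-1}$. This quantity is governed by $\ord_v(\xi)\log p$, not by $\log p$, and nothing bounds $\ord_v(\xi)$. The inequality $\log|\xi|_v^{-1}\geq \log p/e_v$ that you invoke is a lower bound, so it points the wrong way. Even your proposed quantity, $\sum\log p$ over at most $P([L:\Q],\log([L:\Q]h))$ primes each as large as $e^{c[L:\Q]h}$, is only $O(P\cdot [L:\Q]h)$, which is not sublinear. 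For example, if $x(s)$ were a power of a single simultaneously supersingular prime, that one discarded place would carry all of $h(\xi)$ and your inequality would say nothing. The sparsity conjecture controls the \emph{number} of such places, never their share of the height.

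The missing ingredient is that these places need not be discarded. By \Cref{localfactorsprop}(1) (Proposition~3.6 of \cite{daworrpap}), a $v$-adic relation $R_{s,v}$ exists at each archimedean place of proximity. It also exists at each finite place of proximity where $E_{i_1,s_0}$ is not ordinary, whether supersingular or bad. It comes from the triple $I$ being isogenous at both $s$ and $s_0$, has degree $\leq 2$, and lies outside the prime ideal $(\det(X_{i,j,k})-1)$. Its only defect is that it depends on $v$.

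The paper multiplies these factors with the ordinary factors $R_{s,\ord}$ and $Q_{s,\ord}$ (degree $\le 4$, from $I$ and $J$). The result is a genuinely global, non-trivial relation with $\deg R_s\le 2(c_{\bad}+1+\pi_K(s))[L:\Q]+8$. Here $\pi_K(s)$ counts places of $K$ below simultaneously supersingular places of proximity. That count now enters only through the \emph{degree}. Your estimate on the primes plus the conjecture gives $\deg R_s\ll [L:\Q]^{D+1}(\log h)^D$, and the unmodified \Cref{thmandbomb} yields $h\ll[L:\Q]^{c}(\log h)^{c'}$.

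Two smaller corrections:
\begin{enumerate}
\item The trivial relations among the $Y_k$ are just those generated by $\det Y_k=1$; since $S$ is Hodge generic, the isogenies at $s_0$ give no functional relations. Non-triviality holds because no factor lies in that prime ideal, not because there are two triples. The second triple's actual role is to confine the $v$-dependent factors to simultaneously supersingular places.
\item Masser--W\"ustholz bounds on $M,M'$ play no role here, since \Cref{thmandbomb} depends only on the degree of the relation.
\end{enumerate}
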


\subsection{From curves in $Y(1)^n$ to families}\label{section:heightsinfamilies}
We reformulate the height bounds of \Cref{section:theheightbounds} in terms of tuples of elliptic schemes over a curve. This is the geometric setting in which we apply the $G$-functions method.

After replacing the curve by a finite cover when necessary, a curve in $Y(1)^n$ gives rise to a corresponding tuple of one-parameter families of elliptic curves. In this geometric setting, \Cref{heightboundintro} takes the following form.
\begin{theorem}\label{mainhtbound}
	Let $\CE_1,\ldots,\CE_n\rightarrow S$ be one-parameter families of elliptic curves defined over a number field $K$, and write
\[
f:X:= \CE_1\times_{S}\ldots\times_{S}\CE_n \rightarrow S
\]
for their fiber product. Let $h$ be a Weil height on $S$.

Assume there exist indices $i_1<i_2<i_3$ and a point $s_{0} \in S(K)$ for which $X_{s_{0}} = E_{1}\times_K\ldots\times_K E_{n}$ is such that: 
\begin{enumerate} 
	\item the elliptic curves $E_{i_k}$, $k=2,3$, are isogenous to $A_1:=E_{i_1}$,
	\item $A_1$ does not acquire CM over $\bar{\Q}$, and
	\item for all $j\notin\{i_1,i_2,i_3\}$, $E_j$ is not isogenous to $A_1$ and does not acquire CM over $\bar{\Q}$. 
	\end{enumerate}
	Assume also that the induced morphism $S \rightarrow Y(1)^n$ has image a Hodge generic curve and \Cref{conjsupersingLT} holds for the pair $(A_{1},E_{j})$, for all $j\notin\{i_1,i_2,i_3\}$. 
	
	Then there exist constants $c_{1}, c_{2}>0$ such that $h(s) \leq c_{1} \cdot[K(s): \Q]^{c_{2}}$ for all $s$ in the set
	\[
	\left\{s \in S(\bar{\Q}): \CE_{i_1,s}\sim\CE_{i_2,s} \sim \CE_{i_3,s}\sim\CE_{j,s} \text{ for some }j\notin\{i_1,i_2,i_3\} \right\}.
	\]
\end{theorem}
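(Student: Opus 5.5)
We will follow André's $G$-functions method as set up for families of elliptic curves in \cite{daworr4,daworrpap,papaszpy1}, centred at the point $s_0$ rather than at a boundary point. First choose a local parameter $x$ of $S$ at $s_0$, defined over $K$ after a finite extension. The relative de Rham cohomology of each $\CE_i$ carries a Gauss--Manin connection. Its period matrices, expanded around $s_0$ with respect to a basis adapted to a basis of $H^1_{dR}(E_i)$, give a finite family $\vec{Y}$ of $G$-functions in $K[[x]]$; the relevant construction is the one described in \Cref{section:backgroundgfuns}. The hypotheses at $s_0$ say that $E_{i_2}$ and $E_{i_3}$ are isogenous to the non-CM curve $A_1$, and that the remaining $E_j$ are non-CM and not isogenous to $A_1$. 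They determine the trivial relations: the $\bar{\Q}$-Zariski closure of the values of $\vec{Y}$ is cut out exactly by the isogeny relations among the blocks $i_1,i_2,i_3$ at $s_0$. Hodge genericity of the image of $S$ ensures there are no further global polynomial relations among the entries of $\vec{Y}$, by the usual monodromy/André theorem on the algebraic closure of periods.

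Next, for a point $s$ in the set of the statement, with $\CE_{i_1,s}\sim\CE_{i_2,s}\sim\CE_{i_3,s}\sim\CE_{j,s}$, we build at every place $v$ where $s$ is $v$-adically close to $s_0$ (that is, $|x(s)|_v<\min\{1,R_v(\vec{Y})\}$) a polynomial relation among the values $\iota_v(\vec{Y})(x(s))$.
\begin{enumerate}
\item At archimedean places, the isogeny $\CE_{i_1,s}\to\CE_{j,s}$ gives an equality between two period matrices, after conjugation by a rational matrix of bounded shape. Following \cite{daworr4}, we eliminate the unknown rational isogeny matrix using the already present isogenies among $i_1,i_2,i_3$. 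This yields a polynomial in $\vec{Y}(x(s))$, of degree bounded independently of $s$, that is not in the ideal of trivial relations, since it involves block $j$ nontrivially.
\item At finite places $v$ of good ordinary reduction for the relevant fibers, the analogous argument uses crystalline cohomology and the canonical (Frobenius/unit-root) splitting. An isogeny forces compatibility of unit-root subspaces, which gives a $v$-adic relation of the same shape.
\end{enumerate}
The relations from the different places are combined into a single global relation by the usual product/resultant trick. The Andr\'e--Bombieri Hasse principle of \Cref{section:andrebomb} then gives $h(x(s))\ll [K(s):\Q]^{c}$, provided the relation holds at every place where $s$ is close to $s_0$.

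The main obstacle is the finite places $v$ at which $A_{1,s}$ (equivalently all four fibers) has supersingular reduction. There, no unit-root splitting exists, and the isogeny gives no nontrivial $v$-adic relation. This is exactly where \Cref{conjsupersingLT} enters. If $s$ is $v$-adically close to $s_0$, then $\CE_{i,s}$ and $E_i$ have the same reduction modulo $v$, so $v$ is a place of simultaneous supersingular reduction for the pair $(A_1,E_j)$, which is non-CM and non-isogenous. We therefore plan to discard these places from the Hasse principle. Their total contribution to the local term $\sum_{v}\log^+|x(s)|_v^{-1}$ is bounded by $\sum_{p\in\Sigma(A_1,E_j)_{\ssing}}$ of $[K(s):\Q]\log p$ over primes $p$ below some bound $x$. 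A standard argument shows that only primes $p\leq x$ with $\log x\ll h(s)$ can matter, as in \cite{dawpap}. The conjecture bounds their number by $P_{A_1,E_j}([K:\Q],\log\log x)$, hence their contribution by $[K(s):\Q]\cdot\mathrm{poly}(\log h(s))\cdot\log x$. We have to arrange that this becomes $\epsilon\, h(s)+O([K(s):\Q]^{c})$. We would first try a dyadic decomposition in $p$ together with the fact that close primes divide the numerator of $x(s)$. If this works, the contribution is absorbed into the left side of the Andr\'e--Bombieri inequality, giving $h(s)\leq c_1[K(s):\Q]^{c_2}$. Making this absorption quantitatively uniform is the delicate step.
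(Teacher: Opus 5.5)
There is a genuine gap, and it sits exactly at the step you flag as delicate: discarding the supersingular places of proximity cannot work. \Cref{thmandbomb} needs the relation at \emph{every} place with $|x(s)|_v<r_v(Y_G)$. To drop places you would have to show that their share of $h(x(s))$, namely $\frac{1}{[K(s):\Q]}\sum_{v}[K(s)_v:\Q_p]\log^+|x(s)|_v^{-1}$, is at most $\epsilon h+O([K(s):\Q]^c)$. That share is governed by the \emph{valuations} of $x(s)$, not by the number of primes involved. \Cref{conjsupersingLT} controls only the count of such primes. A single place of simultaneous supersingular reduction at which $x(s)$ is highly divisible can carry almost all of the height, so no uniform absorption is possible. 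Your own estimate already shows this: with $\log x\asymp[K(s):\Q]h(x(s))$, the bound $[K(s):\Q]\cdot\mathrm{poly}(\log h)\cdot\log x$ is larger than $h$ itself.

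The missing idea is that supersingular places are not relation-free. \Cref{localfactorsprop} (Proposition~3.6 of \cite{daworrpap}) uses only the triple $i_1,i_2,i_3$. It gives a local factor $R_{s,v}$ of degree $\leq 2$ at every archimedean, supersingular or bad place of proximity. It also gives a \emph{single} factor $R_{s,\ord}$ of degree $\leq 4$ that holds simultaneously at all ordinary places. This $v$-independence is essential and absent from your plan: with one factor per ordinary place, the degree would be of the order of the total number of proximity places, which is far too large.

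The paper takes the product of these factors. None of them lies in the prime ideal $(\det(X_{i,j,k})-1)$, so the product is a global non-trivial relation, with $\deg R_s\leq 2(c_{\bad}+1+\pi_K(s))[K(s):\Q]+4$. The fourth curve enters only to count places, never to build relations. Each supersingular place of proximity is a place of simultaneous supersingular reduction of the non-isogenous, non-CM pair $(A_1,E_j)$, and it lies over a prime $p\leq\exp([K(s):\Q]h(x(s)))$. \Cref{conjsupersingLT} then bounds $\pi_K(s)$ polynomially in $[K(s):\Q]$ and $\log h(x(s))$. The bound $h\leq c_1(\deg R_s)^{c_2}$ therefore gives $h/(\log h)^{c}\ll[K(s):\Q]^{c'}$, and the logarithmic factor is absorbed trivially.

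Two related points are off in your setup. First, block $j$ cannot appear in an archimedean relation with algebraic coefficients the way you describe. Since $E_j\not\sim A_1$, the period matrix of $E_j$ at $s_0$ is unrelated to that of $A_1$, and it cannot be eliminated using the isogenies among $i_1,i_2,i_3$. Non-triviality of the paper's relations comes from not lying in $(\det-1)$, not from involving block $j$. Second, the trivial relations are just $\det Y_k=1$; isogenies at $s_0$ impose no functional relations on the $G$-functions. You also omit the bad places, which the paper handles by a bounded count $c_{\bad}$.
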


Similarly, in this geometric setting, \Cref{heightbound2intro} takes the following form.
\begin{theorem}\label{mainhtbound2}
	Let $\CE_1,\ldots,\CE_n\rightarrow S$ be one-parameter families of elliptic curves defined over a number field $K$, and write
	\[
	f:X:= \CE_1\times_{S}\ldots\times_{S}\CE_n \rightarrow S
	\]
	for their fiber product. Let $h$ be a Weil height on $S$.
	
	Assume there exist two disjoint triples of indices $i_1<i_2<i_3$, $j_1<j_2<j_3$, and a point $s_{0} \in S(K)$ for which $X_{s_{0}} = E_{1}\times_K\ldots\times_K E_{n}$ is such that:
	\begin{enumerate}
		\item the elliptic curves $E_{i_k}$, $k=2,3$, are isogenous to $A_1:=E_{i_1}$,
		\item the elliptic curves $E_{j_k}$, $k=2,3$, are isogenous to $A_2:=E_{j_1}$, and
		\item neither $A_1$ nor $A_2$ acquires CM over $\bar{\Q}$, and $A_1$ and $A_2$ are not isogenous to each other over $\bar{\Q}$.
	\end{enumerate}
	Assume also that the induced morphism $S \rightarrow Y(1)^n$ has image a Hodge generic curve and \Cref{conjsupersingLT} holds for the pair $(A_{1},A_{2})$. 
	
	Then there exist constants $c_{1}, c_{2}>0$ such that $h(s) \leq c_{1} \cdot[K(s): \Q]^{c_{2}}$ for all $s$ in the set
	\[
	\left\{s \in S(\bar{\Q}): \CE_{j_1,s}\sim\CE_{j_2,s} \sim \CE_{j_3,s}\text{ and }\CE_{i_1,s}\sim\CE_{i_2,s} \sim \CE_{i_3,s} \right\}.
	\]
\end{theorem}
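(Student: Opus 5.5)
We follow André's $G$-functions method, as in \cite{daworr4,papaszpy1,dawpap}, centered at the point $s_0$ rather than at a boundary point. First fix a local parameter $x$ on a finite cover of $S$ near $s_0$, with $x(s_0)=0$. After a finite base change, the relative de Rham cohomology $H^1_{dR}(X/S)$ with its Gauss--Manin connection gives a family $\vec{Y}$ of $G$-functions: the entries of the relative period matrix, expressed in a trivializing frame and taken with respect to a basis of $H_1(X_{s_0},\Q)$ transported horizontally. These are the power series described in \Cref{section:backgroundgfuns}. Let $s$ be a point of the set in \Cref{mainhtbound2}. Then $\CE_{i_1,s}\sim\CE_{i_2,s}\sim\CE_{i_3,s}$ and $\CE_{j_1,s}\sim\CE_{j_2,s}\sim\CE_{j_3,s}$. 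These isogenies give polynomial relations among the $v$-adic values $\iota_v(\vec{Y})(\xi)$, where $\xi=x(s)$. At archimedean places the relations come from the Hodge-theoretic relations among periods, once we restrict to $|\xi|_v<R_v(\vec{Y})$. At finite places they come from compatibility with crystalline (Frobenius) structures. The aim is a single polynomial $Q_s$ with $Q_s(\iota_v\vec{Y}(\xi))=0$ at every place $v$ where $\xi$ is $v$-adically close to $0$, with $\deg Q_s$ bounded polynomially in $[K(s):\Q]$, and with $Q_s$ not lying in the ideal of trivial relations. The Andr\'e--Bombieri Hasse principle of \Cref{section:andrebomb} then gives $h(\xi)\le c_1[K(s):\Q]^{c_2}$.

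The archimedean step is standard. The Hodge genericity of $S$ implies, via André's theorem, that the trivial relations among $\vec{Y}$ are exactly those forced by the monodromy group $\SL_2^n$. Near $s_0$ the isogenies $A_1\sim E_{i_k}$ and $A_2\sim E_{j_k}$ let us write the periods of $\CE_{i_k}$ in terms of those of $\CE_{i_1}$, and similarly for the $j$-triple. A new isogeny $\CE_{i_1,s}\sim\CE_{i_2,s}$ of degree $M$ then yields a linear relation between the columns of the period matrices, with integer coefficients of size polynomial in $M$. Masser--W\"ustholz bounds $M$ polynomially in $[K(s):\Q]$ and $h(s)$, and this is absorbed in the usual way. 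The key point is to combine the relations from the $I$-triple and the $J$-triple into one polynomial that is nontrivial, meaning it is not forced by $\SL_2^n$-relations. This works because $A_1\not\sim A_2$, so the two triples involve independent blocks of the monodromy. We take the product of suitable determinant relations, one from each triple, in the style of \cite{daworr4}.

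The main obstacle is the finite places $v$ with $|\xi|_v<R_v(\vec{Y})$, which are the places where $s$ reduces to $s_0$ modulo $v$. At such $v$ we need the relations to hold $v$-adically. When $A_1$ or $A_2$ is ordinary at $v$, the unit-root/canonical-lift splitting of the Frobenius crystal is respected by isogenies. The $p$-adic periods of the triple therefore satisfy the same type of relation, taken in the unit-root line, and the archimedean polynomial (or a product with an analogous one) still vanishes. The failure occurs exactly at places where both $A_1$ and $A_2$ are supersingular. There we do not impose the relation. Instead we bound the local contribution of these places directly: their number is $\pi_{A_1,A_2}(x)$ for $x$ of size polynomial in $[K(s):\Q]$ and $h(s)$, and by \Cref{conjsupersingLT} this is at most $P([K:\Q],\log\log x)$. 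Each such place contributes to $h(\xi)$ at most $O(\log p/[K(s):\Q])$ times local degree. The resulting error is polylogarithmic in $h(s)$, so it can be absorbed into the final inequality. We would first try to make this rigorous by splitting $h(\xi)$ into the part coming from the supersingular places and the remainder, applying the Hasse principle only to the remainder. Both $A_1$ and $A_2$ are non-CM, which is exactly what is needed to invoke the conjecture.
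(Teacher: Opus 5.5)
There is a genuine gap in your treatment of the places of supersingular proximity. You propose imposing no relation there and applying the Andr\'e--Bombieri Hasse principle only to ``the remainder'' of $h(\xi)$. This fails for three reasons.

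First, \Cref{thmandbomb} needs the relation to hold at \emph{every} place $v$ with $|\xi|_v<r_v(\vec{Y})$, so you may not drop places. Second, even a refined version that bounds only the part of the height carried by the places where the relation holds would leave the excluded places uncontrolled. The local contribution $\frac{[K(s)_v:\Q_p]}{[K(s):\Q]}\log|1/\xi|_v$ of a place of proximity is not $O(\log p)$; it is of size $v(\xi)\log p$, and the $v$-adic valuation of $\xi=x(s)$ is unbounded. A single such place can carry essentially all of $h(\xi)$. The estimate in terms of $\log p$ is only a \emph{lower} bound, and that lower bound is exactly what gives $p\le\exp([K(s):\Q]h(x(s)))$. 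Third, that exponential bound is the only general bound on these primes; they are not polynomial in $[K(s):\Q]$ and $h(s)$ as you state. So even granting $O(\log p)$ per place, the excluded part could be as large as $h(x(s))$ times a polylogarithmic factor, which cannot be absorbed.

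The missing idea is that sparsity should control the \emph{degree} of the global relation, not a piece of the height. By \Cref{localfactorsprop}(1) for the triple $I$, at every place of proximity where $A_1$ is non-ordinary (supersingular or bad) there is a local factor $R_{s,v}$ of degree $\le 2$. It vanishes at the $v$-adic values and does not lie in $(\det(X_{i,j,k})-1)$. The paper multiplies these over the archimedean, supersingular and bad places with the two $v$-independent ordinary factors $R_{s,\ord}$ and $Q_{s,\ord}$. This gives
\[
\deg R_s\le 2\bigl(c_{\bad}+1+\pi_K(s)\bigr)[K(s):\Q]+8.
\]
Since the primes under places of proximity satisfy $p\le\exp([K(s):\Q]h(x(s)))$, \Cref{conjsupersingLT} for $(A_1,A_2)$ bounds $\pi_K(s)$ polynomially in $[K(s):\Q]$ and $\log\bigl([K(s):\Q]h(x(s))\bigr)$. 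Hence $\deg R_s\ll[K(s):\Q]^{D+1}(\log h(x(s)))^D$. Then \Cref{thmandbomb} gives $h/(\log h)^{Dc_2}\ll[K(s):\Q]^{c}$, and the logarithm is absorbed.

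The same bookkeeping shows that the ordinary contribution must be a single polynomial independent of $v$, as in \Cref{localfactorsprop}(2), because the number of ordinary places of proximity is not controlled. Your suggestion that the archimedean polynomial itself vanishes at those places is not what is available.

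You also omit the places where neither of $A_1,A_2$ is ordinary and one has bad reduction. These need the factors $R_{s,v}$ too, though this is harmless since only $O(1)$ places of $K$ lie below them.

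Finally, the Masser--W\"ustholz step is unnecessary. The Hasse principle depends only on the degree of the relation, not on its coefficients.
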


The proofs of \Cref{mainhtbound,mainhtbound2} are given in \Cref{section:htboundproof1,section:htboundproof2}, respectively.

\subsection{$G$-functions and the Andr\'e--Bombieri Hasse principle}\label{section:andrebomb}
The notion of a $G$-function was introduced by C. L. Siegel in \cite{siegel}. We recall the form of the Andr\'e--Bombieri Hasse principle for values of $G$-functions used below.

\begin{definition}[\cite{siegel}]
	Let $K$ be a number field and $y(x)=\displaystyle\sum_{n=0}^{\infty}a_nx^n\in K[[x]]$. The power series $y$ is a $G$-function if 
	\begin{enumerate}
		\item $y$ is a solution of a linear homogeneous differential equation with coefficients in $K(x)$,
		\item for all $v\in \Sigma_{K,\infty}$, $\iota_v(y)$ defines an analytic function in a neighborhood of $0$, and 
		\item there exists $C>0$ such that, for all $n\in \N$, there exists $d_n\in \N$ with $d_n\leq C^{n+1}$ and $d_na_m\in \mathcal{O}_K$ for all $m\leq n$.
	\end{enumerate}
\end{definition}

Bombieri initiated the study of values of $G$-functions at algebraic numbers in \cite{bombg}. This line of work was subsequently developed by Andr\'e in \cite{andre1989g}. To make precise the notion of a relation among the values of $G$-functions at an algebraic number $\xi$, Andr\'e introduced the following definitions.
\begin{definition}[\cite{andre1989g}]\label{defrelations}
	Let $\vec{Y}\in K[[x]]^N$ be a collection of $G$-functions, let $R\in L[X_1\ld X_N]$ be a homogeneous polynomial with coefficients in some number field $L$, and let $\xi\in \bar{\Q}$. We say that $R$ defines a relation among the values $\vec{Y}(\xi)$ that is 
	\begin{enumerate}
		\item \textbf{global}, if $\iota_v(R)(\iota_v(\vec{Y})(\iota_v(\xi)))=0$ for all places $v\in \Sigma_{LK(\xi)}$ for which $|\iota_v(\xi)|_v<r_v(\vec{Y}):=\min\{1, R_v(\vec{Y})\}$, and 
		
		\item \textbf{non-trivial}, if there exists no $\tilde{R}\in \bar{\Q}[X_1\ld X_N,x]$ with $\tilde{R}(\underline{X},\xi)=R(\underline{X})$ and $\tilde{R}(\vec{Y},x)=0$ as power series.
	\end{enumerate}
\end{definition}

With these definitions, the Andr\'e--Bombieri Hasse principle for values of $G$-functions is the following.
\begin{theorem}[\cite{bombg,andre1989g}]\label{thmandbomb}
	Let $\vec{Y}\in K[[x]]^N$ be a collection of $G$-functions. For $\delta>0$ consider the set 
	\begin{equation}
		\Sha_{\delta}(\vec{Y}):=\{\xi\in\bar{\Q}:\exists\text{ non-trivial, global relation of degree }\leq\delta \text{ among the } \vec{Y}(\xi)\}.
	\end{equation}
	
	Then there exist positive, effectively computable constants $c_1$ and $c_2$ such that $h(\xi)\leq c_1\delta^{c_2}$ for all $\xi\in \Sha_{\delta}(\vec{Y})$.
\end{theorem}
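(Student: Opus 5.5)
The plan follows the Siegel--Bombieri--Andr\'e scheme: an auxiliary construction by Siegel's lemma, a non-vanishing (zero estimate) step, and the product formula. Fix $\xi\in\Sha_\delta(\vec{Y})$ and a non-trivial global relation $R$ of degree $\leq \delta$ among the values $\vec{Y}(\xi)$. Let $\vec{Y}^{[\delta]}$ be the vector of all monomials of degree $\delta$ in the $y_j$. These are again $G$-functions with coefficients in $K$, and they satisfy a linear differential system whose size and $G$-function constants are polynomial in $\delta$. Write $M\asymp\delta^{N}$ for the number of these monomials, so that $R(\vec{Y})=\sum_{\mu}r_\mu\, \vec{Y}^{\mu}$ with $\mu$ running over them.

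First I construct Pad\'e approximants of type II for $\vec{Y}^{[\delta]}$. I want polynomials $P_\mu(x)\in\mathcal{O}_K[x]$ of degree $\leq L$, not all zero, such that the remainder $\mathcal{R}(x)=\sum_\mu P_\mu(x)\vec{Y}^\mu(x)$ vanishes at $0$ to order at least $L+T$. I choose $T\approx L/M$ (up to a constant) so that Siegel's lemma applies. The denominator bound $d_n\leq C^{n+1}$ in the definition of $G$-function, together with the archimedean convergence, controls the coefficient heights: $h(P_\mu)\ll L$, and every coefficient of $\mathcal{R}$ has $v$-adic size at most $c^{L}$ at each place. Using the differential system, I then differentiate and multiply by $x$ to produce $M$ such linear forms $\mathcal{R}_1,\dots,\mathcal{R}_M$. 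The constants in all of these steps must be polynomial in $\delta$.

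The key step, which I expect to be the main obstacle, is a Shidlovsky-type non-vanishing statement. Consider the $M\times M$ matrix of coefficients $P_{\mu}^{(i)}$, with one row replaced by the coefficient vector $(r_\mu)$ of $R$. I need its determinant $\Delta(\xi)$ to be nonzero after possibly taking a derivative of bounded order $\ll$ poly$(\delta)$. The non-triviality of $R$ is exactly what prevents the row $(r_\mu)$ from lying in the span forced by functional relations of $\vec{Y}$ over $\bar{\Q}(x)$. Andr\'e's zero estimate for the differential system then bounds the order of vanishing of the Pad\'e determinant at $\xi$ by a polynomial in $\delta$. I would try first to reproduce Andr\'e's argument: the Pad\'e determinant is a nonzero polynomial whose degree is controlled, and its vanishing order at any point is bounded via the Wronskian-type structure of the system.

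Finally, I apply the product formula to the nonzero algebraic number $\theta=\Delta^{(k)}(\xi)$. At places $v$ with $|\iota_v(\xi)|_v<r_v(\vec{Y})$, the globality of $R$ lets me substitute $\vec{Y}(\xi)$ and rewrite $\theta$ by Cramer's rule as a combination of the remainders $\mathcal{R}_i(\xi)$. This gives $|\theta|_v\leq c^{L}|\xi|_v^{T}$, small because of the high vanishing order. At the remaining places, including those where $\xi$ lies outside the domain of convergence, I use the trivial bound $|\theta|_v\leq c^L\max(1,|\xi|_v)^{ML}$. Summing logarithms and using $\sum_v\log|\theta|_v=0$ yields roughly $T\,h(\xi)\ll L\,\mathrm{poly}(\delta)+ML\cdot(\text{contribution of large places})$. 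Balancing $L$ and $T$ then gives $h(\xi)\leq c_1\delta^{c_2}$, with $c_1,c_2$ effective and depending only on $\vec{Y}$.
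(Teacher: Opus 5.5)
The paper quotes this result from Bombieri and Andr\'e without proof. Your outline has the right architecture: Siegel's lemma for approximants to the degree-$\delta$ monomials, a Shidlovsky-type zero estimate, the coefficient row of $R$ inserted into a determinant, then the product formula. But with the parameters you fix, the final step yields nothing.

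You use a single linear form $\mathcal{R}=\sum_\mu P_\mu y^\mu$ vanishing only to order $L+T$ with $T\approx L/M$. You then apply the product formula to an $M\times M$ determinant $\theta$ with $M-1$ rows of polynomials of degree about $L$. By your own trivial bound, the places with $|\xi|_v>1$ contribute about $ML\,h(\xi)$ to $\frac{1}{[K(\xi):\Q]}\sum_v\log|\theta|_v$. The ``contribution of large places'' is not an error term but $h(\xi)$ itself, because $\sum_v\log^+|\xi|_v=[K(\xi):\Q]h(\xi)$ and such places are never places of proximity. On the other side, every place of proximity has $|\xi|_v<1$, and $\sum_v\log^+|\xi|_v^{-1}$ also equals $[K(\xi):\Q]h(\xi)$. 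So the total saving at those places is at most about $(L+T)h(\xi)$. Since $L+T$ is far smaller than $(M-1)L$, your final inequality $T\,h(\xi)\ll L\,\mathrm{poly}(\delta)+ML\,h(\xi)$ holds trivially, and no balancing of $L$ against $T\approx L/M$ can repair it.

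The zero estimate fails for the same reason. For a type I form the bound available is $\ord_\xi\Delta\leq\deg\Delta-\ord_0\Delta\approx ML-(L+T)$. This grows linearly in $L$ rather than being polynomial in $\delta$. Moreover, the non-vanishing of $\Delta$ (Shidlovsky's lemma) itself requires near-maximal vanishing at $0$.

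The missing requirement is that the order of vanishing at $0$ exceed the degree in $\xi$ of the number fed into the product formula. There are two consistent ways to achieve this.
\begin{enumerate}
\item Keep your determinant but take near-maximal Hermite--Pad\'e approximants, $\ord_0\mathcal{R}\geq ML-\epsilon L$ with $\epsilon$ small in terms of $M$ and the degree of the connection matrix. Then $\ord_\xi\Delta\leq\epsilon L+O(M^2)$, and $\ord_0\mathcal{R}$ still exceeds $\deg_\xi\theta\approx(M-1)L$, together with the losses from the extra derivatives, by $\gg L$. Siegel's lemma then gives coefficient heights of order $\epsilon^{-1}M^2L$ times the size of the system rather than $O(L)$, which is still polynomial in $\delta$.
\item Use genuine type II (simultaneous) approximants, $P_\mu y^\nu-P_\nu y^\mu=O(x^{L+T})$ for all $\mu,\nu$. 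Here $T$ of order $L/M$, close to the maximum the count of unknowns permits, is the right scale. One applies the product formula to the scalar $\sum_\mu r_\mu P^{(k)}_\mu(\xi)$, of degree about $L$ in $\xi$, so the net saving is about $T\,h(\xi)$.
\end{enumerate}
Your proposal mixes the parameters of the second scheme with the determinant of the first.

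Separately, Shidlovsky's lemma needs linear independence over $\bar{\Q}(x)$, which the degree-$\delta$ monomials typically lack. In this paper's application every $\det Y_k=1$, so $\det Y_1-\det Y_2$ is a homogeneous functional relation. Your remark on non-triviality therefore has to be implemented in three steps: pass to a basis of the $\bar{\Q}(x)$-span of the monomials, rewrite $R$ in that basis while controlling poles at $\xi$, and use non-triviality to see that the resulting row is nonzero.
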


\subsection{Families of elliptic curves and $G$-functions}\label{section:backgroundgfuns}
We recall the construction of the $G$-functions attached to one-parameter families of elliptic curves. After the standard reductions recalled below, let $f:\CE\rightarrow S$ be a family of elliptic curves over a smooth, geometrically irreducible curve $S$ defined over a number field $K$, and fix a point $s_0\in S(K)$.

For the height bounds considered here, we may replace $K$ by a finite extension and $S$ by a finite cover whenever necessary. This allows us to impose standard auxiliary assumptions on the pair $(f:\CE\rightarrow S,s_0)$. For this reduction, see the discussion in \S 2.2 of \cite{papaszpy1}.

In particular, we may assume that the fiber $\CE_{s_0}$ satisfies the reduction properties required below, such as everywhere semistable reduction; see Lemma 2.4 of \cite{papaszpy1}. We may also assume that there exists a local parameter $x\in K(S)$ at $s_0$ with the properties needed to center the $G$-functions at $s_0$. For these properties of $x$, see Lemma 5.1 of \cite{daworr4}.

The construction of Daw--Orr in \S 5 of \cite{daworr4} associates to the data $(\CE\rightarrow S,s_0,x)$ a tuple of families
\[
\CE_1\times_S\ldots\times_S \CE_{\Lambda}\rightarrow S
\]
by taking suitable twists of the original family. The construction and the required choices are treated in \cite{daworr4,papaspadicpart1,daworrpap}. From this point on, we work in the simplified setting in which the chosen local parameter $x$ has a simple zero only at $s_0$; in this case the above construction has $\Lambda=1$.

Under the above assumptions, one associates to the pair $(\CE\rightarrow S,s_0)$ a matrix $Y_\CE\in M_{2}(K[[x]])$ whose entries are $G$-functions centered at $s_0$. It may be chosen so that:
 \begin{enumerate} 
	\item $Y_{\CE}\in \SL_2(K[[x]])$, and 
	\item for any $s\in S(\bar{\Q})$ and any $v\in \Sigma_{K(s),f}$, we have $|x(s)|_v<r_v(Y_{\CE}):=\min\{1,R_v(Y_{\CE})\}$ if and only if $\CE_s$ and $\CE_{s_0}$ have the same reduction modulo $v$. 
	\end{enumerate} 
	For the construction of $G$-functions from one-parameter families of abelian varieties, see \cite{andre1989g}. The normalization above is obtained by choosing an appropriate basis of sections of $H^1_{dR}(\CE/S)$; see \S 3 of \cite{papaspadicpart1}.

Given a tuple $\CE_1\times_S\ldots\times_S \CE_n\rightarrow S$ in the above sense, we associate to each $\CE_j$ its matrix $Y_j:=Y_{\CE_j}$. The collection of $G$-functions used below is denoted by \[ Y_G:=\{Y_1,\ldots,Y_n\}. \] We also set $r_v(Y_G):=\min_j r_v(Y_j)$. With this notation, for any point $s\in S(\bar{\Q})$ and any place $v$ of $K(s)$, we say that $s$ is $v$\textbf{-adically close to }$s_0$ if $|x(s)|_v<r_v(Y_G)$.
\section{Height bounds}\label{section:proofsandmore}

We prove the two height bounds stated in \Cref{section:heightsinfamilies}.

\subsection{Local factors}\label{section:localfactors}
The construction of global and non-trivial relations, in the terminology of \Cref{defrelations}, proceeds by first constructing relations at each place of proximity and then multiplying them together. We refer to these local relations as the local factors of the relation. 

In the present setting, the required local factors were constructed in joint work of the author with C. Daw and M. Orr \cite{daworrpap}. The relevant statement is Proposition~3.6 of \cite{daworrpap}, which we use in the following form, adapted to the notation of \Cref{section:backgroundgfuns}.
\begin{prop}[\cite{daworrpap}]\label{localfactorsprop}
Let $f_k:\CE_k\rightarrow S$, for $1\leq k\leq 3$, be $1$-parameter families of elliptic curves defined over a number field $K$, and let $s_0\in S(K)$. Let $Y_{k}:=(y_{i,j,k})$, for $1\leq k\leq 3$, be the $2\times 2$ matrices of $G$-functions associated to the $f_k$ and centered at $s_0$.

Assume that $s_0$ is such that the fibers $E_i:=\CE_{i,s_0}$, $1\leq i\leq 3$, are all isogenous, and let $s\in S(\bar{\Q})$ be a point for which the fibers $\CE_{j,s}$, $1\leq j\leq 3$, are all isogenous.
\begin{enumerate}
	\item For each $v\in \Sigma_{K(s)}$ for which $s$ is $v$-adically close to $s_0$ and, if $v$ is finite, $E_1$ does not have ordinary reduction modulo $v$, there exists $R_{s,v}\in K(s)[X_{i,j,k}:1\leq i,j\leq 2, 1\leq k\leq 3]$ for which the following hold:
\begin{enumerate}[label=\roman*.]
	\item $R_{s,v}$ is homogeneous with $\deg(R_{s,v})\leq 2$,
\item $\iota_v\left(R_{s,v}(Y_k(x(s));1\leq k\leq 3)\right)=0$, and
	\item $R_{s,v}$ is not in the ideal $(\det(X_{i,j,k})-1;1\leq k\leq 3)$.
\end{enumerate}

\item There exists a polynomial $R_{s,\ord}\in K(s)[X_{i,j,k}:1\leq i,j\leq 2, 1\leq k\leq 3]$ such that the following hold:
\begin{enumerate}[label=\roman*.]
	\item $R_{s,\ord}$ is homogeneous with $\deg(R_{s,\ord})\leq 4$,
    \item $\iota_v\left(R_{s,\ord}(Y_k(x(s));1\leq k\leq 3)\right)=0$ for all $v\in \Sigma_{K(s),f}$ for which $s$ is $v$-adically close to $s_0$ and $E_1$ is ordinary modulo $v$, and
	\item $R_{s,\ord}$ is not in the ideal $(\det(X_{i,j,k})-1;1\leq k\leq 3)$.
\end{enumerate}\end{enumerate}
\end{prop}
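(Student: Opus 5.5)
This proposition is quoted from Proposition~3.6 of \cite{daworrpap}. I would reconstruct its proof from the comparison between de Rham and Betti (or crystalline) structures that the matrices $Y_k$ encode. Write $Y_k(x)$ as the relative period matrix of $\CE_k$ with respect to the chosen basis of $H^1_{dR}$ near $s_0$. At a place $v$ where $s$ is $v$-adically close to $s_0$, the value $\iota_v(Y_k(x(s)))$ is the matrix of the $v$-adic (or complex) parallel transport $H^1_{dR}(\CE_{k,s})\to H^1_{dR}(E_k)$. Let $\phi:\CE_{1,s}\to\CE_{k,s}$ and $\psi:E_1\to E_k$ be isogenies. The induced maps $\phi^*$ on $H^1_{dR}$ respect the Hodge filtration $F^1$ and are defined over $K(s)$, respectively $K$. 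Transport then yields a relation of the form
\[
\iota_v(Y_k(x(s)))\,[\phi^*]=\Theta_{k,v}\,\iota_v(Y_1(x(s))),
\]
where $\Theta_{k,v}$ lies in the $v$-adic commutant determined by $E_1$. The idea is to eliminate the unknown transcendental part $\Theta_{k,v}$ using the structure of $\mathrm{End}$ at $s_0$.

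For (1), I would split into the archimedean and the supersingular case. In the archimedean case, $\Theta_{k,v}$ is the rational matrix of $\psi$ on $H^1(E_1(\C),\Q)$. Since $E_1$ has no CM, the endomorphism algebra acting on $H^1_B$ is $\Q$. Hence the three transport matrices are rational multiples of one another after composing with the known algebraic matrices $[\phi^*]$. The first-row entries then satisfy $2\times2$ minor vanishings of the form
\[
\det\begin{pmatrix} (Y_2 A)_{1,1} & (Y_2 A)_{1,2}\\ (Y_3 B)_{1,1} & (Y_3 B)_{1,2}\end{pmatrix}\!\!\cdot\text{(coefficients)}=0,
\]
or more simply a bilinear relation in the entries of two of the $Y_k$. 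Such a relation is homogeneous of degree $2$ and expresses that the images of $F^1$ align. In the supersingular case, $\Theta_{k,v}$ lies in the Frobenius commutant, a quaternion algebra. One now uses the third curve: two isogenies give two relations whose $\Theta$'s lie in a $4$-dimensional space, and the fact that $F^1$ is preserved lets one eliminate them while still obtaining a degree-$2$ relation. This is exactly why three curves are needed.

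Non-triviality, condition iii in both parts, I would prove by Hodge genericity. A relation of this shape lying in the ideal $(\det-1)$ would force, by the monodromy argument of Andr\'e, an algebraic relation among the $Y_k$ as power series. The Zariski closure of monodromy is $\SL_2^3$, because the three families are pairwise non-isogenous generically. Using this, I would show that the explicitly written relation does not vanish identically on $\SL_2^3$, by checking that a leading coefficient built from $\phi^*$ and $\psi^*$ is nonzero.

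For (2), at ordinary places the crystalline Frobenius splits $H^1$ into unit-root and $F^1$ parts, and $\Theta_{k,v}$ becomes diagonal in the canonical basis. The key is that the unit-root line is independent of $v$ up to the algebraic data. I would form a single relation of degree $\le 4$ by taking the product of two degree-$2$ relations. The first expresses that $F^1$ is preserved by the isogenies, and the second that the ``unit root'' coordinates are compatible. After eliminating the diagonal $\Theta$, the coefficients depend only on $[\phi^*],[\psi^*]$ and not on $v$.

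The main obstacle is this $v$-independence in (2). I expect to overcome it by working with the de Rham matrices of $\phi^*$ in the fixed basis, which are global over $K(s)$.
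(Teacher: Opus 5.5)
The paper does not prove this statement; it imports it as Proposition~3.6 of \cite{daworrpap}. Taken on its own terms, your reconstruction has a genuine gap already at the archimedean places, even though the bilinear shape you guess there is right.

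Let $T_k$ be the transport $H^1_{dR}(E_k)\otimes\C_v\to H^1_{dR}(\CE_{k,s})\otimes\C_v$ given by $\iota_v(Y_k(x(s)))$, and let $A_k$, $D_k$ be the algebraic de Rham matrices of $\phi_k^*$ and $\psi_k^*$. Set $\Theta_{k,v}:=T_1^{-1}\circ\phi_k^*\circ T_k\circ(\psi_k^*)^{-1}$, whose matrix is $Y_1^{-1}A_kY_kD_k^{-1}$ up to conventions. At an archimedean $v$ this equals $\Pi Q_k\Pi^{-1}$. Here $\Pi$ is the transcendental period matrix of $E_1$, and $Q_k\in\mathrm{GL}_2(\Q)$ is the Betti matrix of $\psi_k^{-1}$ composed with the transported $\phi_k$. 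This $Q_k$ is not an endomorphism of $E_1$, because parallel transport does not carry $F^1$ at $s$ to $F^1$ at $s_0$. So $\mathrm{End}(E_1)=\mathbb{Z}$ says nothing about it: in general it is an arbitrary element of $\mathrm{GL}_2(\Q)$, and it is scalar only if the transported $\psi_k$ is itself an isogeny at $s$. Your claim that the transport matrices are rational multiples of one another is therefore false. The minor relation you write also has coefficients involving $\Pi$, so it is not a relation over $\bar{\Q}$. The missing step is to eliminate $\Pi$ through a conjugation invariant. Since $\det Y_1=1$, one has $\operatorname{tr}(\mathrm{adj}(Y_1)A_kY_kD_k^{-1})=\operatorname{tr}(Q_k)\in\Q$. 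Homogenizing the constant with $\det X_1$ gives a degree-$2$ factor. It is not in the ideal, because $X\mapsto\operatorname{tr}(MX)$ is non-constant on $\SL_2$ for $M\neq 0$.

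The same trace relation handles supersingular places. There $\Theta_{k,v}$ is the crystalline realization of $\bar\psi_k^{-1}\circ\bar\phi_k\in\mathrm{End}^0(\bar E_1)$, whose reduced trace is rational. So part (1) needs only two of the curves, and your explanation of why three are needed is misplaced. You also never treat the finite places of bad reduction, which part (1) covers.

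The third curve is what part (2) requires, and there your argument rests on a false premise. The unit-root line is the Frobenius-stable complement of $F^1$ at $v$, so it varies with $v$. You also never specify the two degree-$2$ factors or explain why their product vanishes at every ordinary place; the trace relation alone has a $v$-dependent constant. What actually gives a $v$-independent factor is that, for ordinary $\bar E_1$, the algebra $\mathrm{End}^0(\bar E_1)$ is an imaginary quadratic field, hence commutative. So $\Theta_{2,v}$ and $\Theta_{3,v}$ commute: both are diagonal in the slope decomposition at $v$, whatever that decomposition is. Now take an entry of $[\mathrm{adj}(X_1)A_2X_2D_2^{-1},\,\mathrm{adj}(X_1)A_3X_3D_3^{-1}]$ that does not vanish on $\SL_2^3$. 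It is homogeneous of degree $4$, its coefficients are built from $A_k$ and $D_k$ alone, and it vanishes at all ordinary places of proximity at once. This is the mechanism the paper alludes to in its final section, and it is what your part (2) lacks.
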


\subsection{Proof of \Cref{mainhtbound}}\label{section:htboundproof1}
	We work in the simplified setting fixed in \Cref{section:backgroundgfuns}. The reductions leading to this setting, and the corresponding arguments in greater generality, are treated in \cite{daworr4,daworrpap,papaspadicpart1}; see in particular the proof of Proposition~5.1 of \cite{papaspadicpart1}. The proof below follows the $G$-function height-bound strategy developed in \cite{daworr4,daworrpap,dawpap}.

By relabeling the families, we may assume that the distinguished triple of indices is $I=\{1,2,3\}$. Since there are only finitely many possible choices of the additional index $j\notin I$, it suffices to prove the desired bound after fixing one such index, enlarging the constants at the end if necessary. We therefore fix a point $s\in S(\bar{\Q})$ in the set appearing in \Cref{mainhtbound} and, after relabeling the remaining factors, assume that the corresponding index is $j=4$, so that $\CE_{1,s},\ldots,\CE_{4,s}$ are all isogenous. Set \begin{equation}\label{eq:primesofprox} 
		\Sigma(s):=\left\{v \in \Sigma_{K(s)}: s \text{ is } v\text{-adically close to } s_{0}\right\}. 
		\end{equation}
	
	By the definition of $v$-adic proximity, if $v\in \Sigma(s)$ is finite, then each of the fibers $\CE_{j,s}$, $1\leq j\leq 4$, has the same reduction modulo $v$ as the corresponding fiber $E_j=\CE_{j,s_0}$. Since the fibers $\CE_{i,s}$, $1\leq i\leq 4$, are all isogenous, they have the same reduction type at $v$: either they all have ordinary reduction, they all have supersingular reduction, or they all have bad reduction. This follows from the fact that isogenies preserve good reduction and, in good reduction, preserve the ordinary/supersingular alternative; see for example Theorem~V.3.1 of \cite{silvermanell}.

	We decompose $\Sigma(s)$ as follows: 
	\begin{enumerate} 
		\item $\Sigma(s)_{\ord}$ is the subset of finite places $v$ for which $E_1$ and $E_4$ both have ordinary reduction modulo $v$, 
		\item $\Sigma(s)_{\ssing}$ is the subset of finite places $v$ for which $E_1$ and $E_4$ both have supersingular reduction modulo $v$, 
		\item $\Sigma(s)_{\bad}$ is the subset of finite places $v$ for which $E_1$ and $E_4$ both have bad reduction modulo $v$, and 
		\item $\Sigma(s)_{\infty}$ is the subset of infinite places in $\Sigma(s)$. 
		\end{enumerate} By the preceding paragraph, $\Sigma(s)$ is the disjoint union of these four sets.
	
We apply \Cref{localfactorsprop} to the first three elliptic schemes $\CE_k\rightarrow S$, $1\leq k\leq 3$. Let $R_{s,\ord}$ and $R_{s,v}$ denote the polynomials obtained from \Cref{localfactorsprop}. Define \begin{equation}\label{eq:globalrelation} R_{s}:=\left(\prod_{v \in \Sigma(s)_{\infty}} R_{s, v}\right) \cdot\left(\prod_{v \in \Sigma(s)_{\ssing}} R_{s, v}\right) \cdot\left(\prod_{v \in \Sigma(s)_{\bad}} R_{s, v}\right) \cdot R_{s, \ord}, \end{equation} with the convention that an empty product is equal to $1$. With this convention, $R_s$ defines the required global relation among the values of the relevant entries of $Y_G$ at $x(s)$.

The relation defined by \eqref{eq:globalrelation} is non-trivial since the ideal
\[
(\det(X_{i,j,k})-1:1\leq k\leq 3)
\]
is prime, and none of the local factors $R_{s,v}$ and $R_{s,\ord}$ lies in this ideal. For the non-triviality criterion, see \S 4.5.6 of \cite{daworrpap}. It remains to bound $\deg(R_s)$ strongly enough to deduce the desired height bound from \Cref{thmandbomb}.

First consider $R_{s,\infty}:=\prod_{v \in \Sigma(s)_{\infty}} R_{s,v}$. By \Cref{localfactorsprop}, each factor $R_{s,v}$ has degree at most $2$, and therefore $\deg(R_{s,\infty})\leq 2[K(s):\Q]$. Next, let $c_{\bad}$ denote the number of finite places $w\in \Sigma_{K,f}$ at which $E_1$ has bad reduction. This number is finite and independent of $s$. For $R_{s,\bad}:=\prod_{v \in \Sigma(s)_{\bad}} R_{s,v}$, it follows that $\deg(R_{s,\bad})\leq 2c_{\bad}[K(s):\Q]$.

It remains to bound the degree of $R_{s,\ssing}:=\prod_{v \in \Sigma(s)_{\ssing}} R_{s,v}$. This is where \Cref{conjsupersingLT} enters. Define \begin{equation}\label{eq:ssingsetoverK} \Sigma(s)_{\ssing,K}:=\{ w\in\Sigma_{K,f}:\exists v\in \Sigma(s)_{\ssing} \text{ with }v|w\}, \end{equation} and set $\pi_K(s):=\# \Sigma(s)_{\ssing,K}$ and $\pi(s):=\#\Sigma(s)_{\ssing}$. Since each $R_{s,v}$ has degree at most $2$, we have $\deg(R_{s,\ssing})\leq 2\pi(s)\leq 2[K(s):\Q]\pi_K(s)$.

Combining the preceding bounds gives \begin{equation}\label{eq:proof1degreebound1} 
	\deg(R_s)\leq 2(c_{\bad}+1+\pi_K(s))[K(s):\Q]+4. \end{equation} 

It remains to control the supersingular contribution, measured here by $\pi_K(s)$. This is achieved by the following bound.
 \begin{claim}\label{claimusinglangtrotter} There exist positive constants $C_0$, $C_1$, and $D$ such that, for every such $s$, \begin{enumerate} \item either $h(x(s))\leq C_0$, or \item $\pi_K(s)\leq C_1([K(s):\Q]^{D}+(\log h(x(s)))^D)$. \end{enumerate} \end{claim}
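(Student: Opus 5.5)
The plan is to bound the primes in $\Sigma(s)_{\ssing,K}$ in terms of the height of $x(s)$, and then apply \Cref{conjsupersingLT} to the fixed pair $(A_1,E_4)$ over $K$.

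\textbf{Step 1: these primes divide a fixed quantity attached to $x(s)$.} If $w\in\Sigma(s)_{\ssing,K}$, there is some $v\mid w$ of $K(s)$ with $|x(s)|_v<r_v(Y_G)\leq 1$. Since $x(s)\neq 0$ (we may exclude $s=s_0$), the element $x(s)$ has positive $v$-adic valuation. Hence the rational prime $p$ below $w$ divides the norm $N_{K(s)/\Q}$ of the numerator ideal of $x(s)$.

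\textbf{Step 2: count such primes via the height.} The absolute norm of the numerator ideal $\prod_{v:|x(s)|_v<1}|x(s)|_v^{-[K(s)_v:\Q_p]}$ is at most $H(x(s))^{[K(s):\Q]}$. Therefore every $p$ lying below a place of $\Sigma(s)_{\ssing,K}$ satisfies
\[
p\leq H(x(s))^{[K(s):\Q]}=\exp\big([K(s):\Q]\,h(x(s))\big)=:x_s .
\]

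\textbf{Step 3: apply the conjecture.} Each $w\in\Sigma(s)_{\ssing,K}$ is a place of $K$ at which $E_1=A_1$ and $E_4$ are both supersingular, so $w\in\Sigma(A_1,E_4)_{\ssing}$. At most $[K:\Q]$ places $w$ of $K$ lie over a given $p$. Hence
\[
\pi_K(s)\leq [K:\Q]\,\pi_{A_1,E_4}(x_s).
\]
By the hypotheses of \Cref{mainhtbound}, $A_1$ and $E_4$ are non-CM and non-isogenous over $\bar{\Q}$, so \Cref{conjsupersingLT} applies over the fixed field $K$. It gives a polynomial $P=P_{A_1,E_4}$ and a threshold $x^\ast$ such that $\pi_{A_1,E_4}(x)\leq P([K:\Q],\log\log x)$ for $x\geq x^\ast$. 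Since $[K:\Q]$ is fixed, this is $\leq C(1+(\log\log x)^{D'})$.

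\textbf{Step 4: case split.} If $x_s<x^\ast$, trivially $\pi_K(s)\leq [K:\Q]\,\pi(x^\ast)$, a constant. Otherwise
\[
\log\log x_s=\log[K(s):\Q]+\log h(x(s)).
\]
This step needs $h(x(s))\geq 1$. The failing case $h(x(s))<C_0$ is exactly alternative (1), so we take $C_0$ to be a large constant. Using $(a+b)^{D'}\leq 2^{D'}(a^{D'}+b^{D'})$ and $\log[K(s):\Q]\leq[K(s):\Q]$, we get
\[
\pi_K(s)\leq C_1\big([K(s):\Q]^{D}+(\log h(x(s)))^{D}\big),
\]
after absorbing the constant case into $C_1$.

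The only real subtlety is Step 2: making sure proximity forces $|x(s)|_v<1$ and correctly bounding the relevant primes by $H(x(s))^{[K(s):\Q]}$. The rest is a direct application of \Cref{conjsupersingLT} over the fixed field $K$.
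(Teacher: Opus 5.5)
Your proposal is correct and follows essentially the same route as the paper. You bound every rational prime below a supersingular place of proximity by $\exp([K(s):\Q]\,h(x(s)))$ using the finite-place part of the height of $x(s)$, apply \Cref{conjsupersingLT} to the pair $(A_1,E_4)$ over $K$, and split into the bounded-height case and the $\log\log$ expansion. The only differences are cosmetic: you use the fixed $[K:\Q]$ as the first argument of $P_{A_1,E_4}$, which is the natural reading since the curves are defined over $K$ (the paper writes $[K(s):\Q]$), and you absorb the small case $x_s<x^\ast$ into alternative (2) rather than alternative (1).
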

\begin{proof}[Proof of \Cref{claimusinglangtrotter}]
Set \[ 
A(s):=\{p\in \Sigma_{\Q,f}: \exists v\in \Sigma(s)_f \text{ with } v|p\} 
\] 
to be the set of rational primes lying below finite places of proximity. We also set 
\[ 
\Sigma(s)_{\ssing,\Q}:=\{p\in \Sigma_{\Q,f}: \exists w\in \Sigma(s)_{\ssing,K} \text{ with } w|p\}.
 \] 
 Finally, let $\pi_{\Q}(s):=\#\Sigma(s)_{\ssing,\Q}$, and note that $\pi_K(s)\leq [K:\Q]\pi_{\Q}(s)$.

By the finite-place contribution to the logarithmic height of $x(s)$, we have \[ \sum_{p\in \Sigma(s)_{\ssing,\Q}} \log p \leq \sum_{p\in A(s)} \log p \leq [K(s):\Q] h(x(s)). \] In particular, if $p\in \Sigma(s)_{\ssing,\Q}$, then \[ p\leq \exp([K(s):\Q]h(x(s))). \]

Set \[ \alpha(s):=[K(s):\Q]h(x(s)). \] In the notation of \Cref{conjsupersingLT}, the preceding bound gives 
\[ 
\pi_{\Q}(s)\leq \pi_{E_1,E_4}(\exp(\alpha(s))). 
\] 
By the assumptions on the pair $(E_1,E_4)$, \Cref{conjsupersingLT} gives, provided $\alpha(s)\geq C_{\mathrm{LT}}$ for a constant $C_{\mathrm{LT}}$ depending only on $E_1$ and $E_4$, 
\[ 
\pi_{\Q}(s)\leq P_{E_1,E_4}([K(s):\Q],\log \alpha(s)), 
\]
where $P_{E_1,E_4}(X,Y)$ is a polynomial whose degree and coefficients depend only on $E_1$ and $E_4$.

After choosing $D$ sufficiently large, depending only on the degrees of the monomials of $P_{E_1,E_4}$, we may choose $C_1$ sufficiently large so that
\[
 P_{E_1,E_4}([K(s):\Q],\log \alpha(s)) \leq C_1\left([K(s):\Q]^D+(\log h(x(s)))^D\right), 
 \]
 provided $\alpha(s)\geq C_{\mathrm{LT}}$.

	If either $\alpha(s)<C_{\mathrm{LT}}$ or $h(x(s))$ lies in a bounded range, then the first alternative of the claim holds, for $C_0$ chosen sufficiently large depending only on $E_1$ and $E_4$. Otherwise the preceding bound gives the second alternative, after increasing $C_1$ if necessary, using $\pi_K(s)\leq [K:\Q]\pi_{\Q}(s)$.
	\end{proof}
   
   Increasing $C_0$ in \Cref{claimusinglangtrotter} if necessary, we may assume that $C_0\geq e$. If the first alternative in \Cref{claimusinglangtrotter} holds, then $h(x(s))$ is already bounded independently of $s$, and the desired conclusion follows after the height comparison at the end of the proof. We may therefore assume that the first alternative does not hold. Thus $h(x(s))>C_0$, and in particular $\log h(x(s))\geq 1$.
   
 	Combining \eqref{eq:proof1degreebound1} with the bound for $\pi_K(s)$ in \Cref{claimusinglangtrotter}, we obtain positive constants $C_4$ and $C_5$, independent of $s$, such that \[ \deg(R_s)\leq C_4[K(s):\Q]^{D+1} +C_5[K(s):\Q](\log h(x(s)))^D. \] Since $\log h(x(s))\geq 1$, after increasing $C_4$ if necessary, this gives \begin{equation}\label{eq:proof1degreebound2} \deg(R_s)\leq C_4[K(s):\Q]^{D+1}(\log h(x(s)))^D. \end{equation}
 	
   Since $R_s$ defines a global non-trivial relation among the values of $Y_G$ at $x(s)$, \Cref{thmandbomb} gives positive constants $c_1$ and $c_2$, independent of $s$, such that \begin{equation}\label{eq:htbound1} h(x(s)) \leqslant c_{1} (\deg R_{s})^{c_{2}}. \end{equation}
   
   Combining this with \eqref{eq:proof1degreebound2}, and using the present assumption that $h(x(s))>C_0$, we obtain constants $c'_1>0$ and $c'_2>0$ such that \begin{equation*} \frac{h(x(s))}{(\log h(x(s)))^{D\cdot c_2}}\leq c'_1[K(s):\Q]^{c'_2}. \end{equation*} Absorbing the logarithmic factor into the constants, we obtain \begin{equation}\label{eq:proof1heightlogbound}  h(x(s))\leq c''_1[K(s):\Q]^{c''_2}.\end{equation}
   
   By the classical properties of the Weil height machine, see for example Theorem~B.3.2 of \cite{hindrysilverman}, applied to the finite morphism induced by $x$ on the smooth projective model of $S$, the height $h(s)$ is bounded above by a linear function of $h(x(s))$. The desired height bound follows.
\subsection{Proof of \Cref{mainhtbound2}}\label{section:htboundproof2}

For simplicity we assume that $I=(i_1,i_2,i_3)=(1,2,3)$ and that $J=(j_1,j_2,j_3)=(4,5,6)$. We write $\Sigma(s)$, as in \eqref{eq:primesofprox}, for the set of places of proximity of $s$ and $s_0$. The proof is parallel to that of \Cref{mainhtbound}. We record the modifications needed in the present setting.

Fix a point $s$ as described in \Cref{mainhtbound2}. For each of the triples $I$ and $J$, we may apply \Cref{localfactorsprop} to obtain local factors for the global relation. We denote these by $R_{s,v}$ and $R_{s,\ord}$ when they are constructed for the triple $I$, and by $Q_{s,v}$ and $Q_{s,\ord}$, respectively, when they are constructed for the triple $J$.

Writing $E_1\times_K\ldots\times_K E_n$ for the fiber over $s_0$, we decompose $\Sigma(s)$ as follows:
\begin{enumerate}
	\item $\Sigma(s)_{\ord}$ is the subset of finite places $v$ for which at least one of $E_1$ and $E_4$ has ordinary reduction modulo $v$,
	\item $\Sigma(s)_{\ssing}$ is the subset of finite places $v$ for which both $E_1$ and $E_4$ have supersingular reduction modulo $v$,
	\item $\Sigma(s)_{\bad}$ is the subset of finite places $v$ for which neither $E_1$ nor $E_4$ has ordinary reduction modulo $v$, and at least one of them has bad reduction modulo $v$, and
	\item $\Sigma(s)_{\infty}$ is the subset of infinite places in $\Sigma(s)$.
\end{enumerate}

Consider
\begin{equation}\label{eq:globalrelation2} 
	R_{s}:=\left(\prod_{v \in \Sigma(s)_{\infty}} R_{s, v}\right) \cdot\left(\prod_{v \in \Sigma(s)_{\ssing}} R_{s, v}\right) \cdot\left(\prod_{v \in \Sigma(s)_{\bad}} R_{s, v}\right) \cdot R_{s, \ord}\cdot Q_{s,\ord}. 
\end{equation}
By construction, the polynomial $R_s$ defines a global relation among the values of the relevant entries of $Y_G$ at $x(s)$. Its non-triviality follows, as in the proof of \Cref{mainhtbound}, from the primeness of the ideal $(\det(X_{i,j,k})-1:1\leq k\leq n)$ and the fact that none of the local factors lies in this ideal.

Applying \Cref{thmandbomb}, we obtain \[ 
h(x(s))\leq c_1(\deg R_s)^{c_2}. 
\]
 It remains to bound $\deg(R_s)$. As in the proof of \Cref{mainhtbound}, the infinite places contribute at most $2[K(s):\Q]$, while the bad places contribute at most $2c_{\bad}[K(s):\Q]$, for a constant $c_{\bad}$ depending only on the number of places of bad reduction of $E_1$ and $E_4$. Finally, the two ordinary factors $R_{s,\ord}$ and $Q_{s,\ord}$ have degree at most $4$ each. Thus 
 \[
  \deg(R_s)\leq 2(c_{\bad}+1+\pi_K(s))[K(s):\Q]+8, 
  \] 
 where $\pi_K(s)$ denotes the number of places of $K$ below places in $\Sigma(s)_{\ssing}$. The argument of \Cref{claimusinglangtrotter}, applied to the pair $(E_1,E_4)$, bounds $\pi_K(s)$ in terms of $[K(s):\Q]$ and $\log h(x(s))$. The final absorption and height-comparison steps are then exactly as in the proof of \Cref{mainhtbound}.
\subsection{Lang--Trotter phenomena and unlikely intersections}\label{section:speculation}

Let $X$ be either $Y(1)^n$ or $\mathcal{A}_g$, the moduli space of principally polarized $g$-dimensional abelian varieties, and let $S\subset X$ be a curve defined over $\bar{\Q}$. We explain a general strategy, suggested by the preceding arguments, for proving Zilber--Pink-type statements for intersections of $S$ with special subvarieties of $X$ corresponding to abelian varieties with extra endomorphisms.

From the perspective of the $G$-functions method, a natural approach is to start from a point $s_0\in S$ lying on one such special subvariety and then bound the heights of the other points $s\in S(\bar{\Q})$ arising from intersections with special subvarieties of the same type. For example, when $X=\mathcal{A}_2$, one may start from a point $s_0$ corresponding to an abelian surface with quaternionic multiplication and seek a height bound for all other such points on $S$. This is the perspective taken in \Cref{thmzpiny(1)n,thmzpiny(1)n2}.

Let $Y_G$ denote the family of $G$-functions associated to $S$ and centered at $s_0$, as in \Cref{section:backgroundgfuns}. This line of inquiry was studied for $\mathcal{A}_2$ by the author in \cite{papaspadicpart1,papaspadicpart3} and for $Y(1)^3$ in joint work with C. Daw and M. Orr \cite{daworrpap}. These works isolate the first ingredient used here: at places of ordinary reduction of the central fiber over $s_0$, the local factor $R_{s,v}$ of the global relation $R_s$ may be chosen independently of $v$.

This phenomenon reduces the required height bound, via \Cref{thmandbomb}, to controlling the contribution to the global non-trivial relation $R_s$ coming from places of proximity at which the central fiber over $s_0$ has supersingular reduction. In the language of \Cref{section:htboundproof1}, these are the places of supersingular proximity of $s$ to $s_0$. 

It is natural, in light of the results in \cite{dawpap,daworrpap,papaspadicpart1,papaspadicpart2,papaspadicpart3}, to expect this mechanism to persist in greater generality. We separate this expectation into two features. 

\subsubsection{Independence of local factors at non-supersingular places} 

The independence of local factors at ordinary places in the cases discussed above rests on the action of Frobenius on crystalline cohomology. In the ordinary case, the Frobenius eigenvalues have distinct valuations, and the resulting slope decomposition reduces the number of algebraic quantities in the period relations that depend on the place $v$.

For higher-dimensional abelian varieties, reduction modulo $v$ is no longer governed simply by the ordinary/supersingular duality. Instead, the possible reduction types are reflected in finitely many Newton polygons, whose possible slopes depend on the dimension $g$ of the abelian varieties under consideration. This suggests that non-supersingular Newton polygons of a fixed type should contribute a single local factor independent of the place $v$, possibly depending on the codimension of the special subvariety with which $S$ is intersected. 

The second feature is the sparsity of the supersingular places of proximity, which is the aspect studied more directly here and in \cite{dawpap}. 

\subsubsection{Sparsity of supersingular places of proximity}

In all constructions of global relations mentioned above, the remaining contribution is the analogue of the factor 
\[
\prod_{v \in \Sigma(s)_{\ssing}} R_{s,v} 
\]
appearing in \eqref{eq:globalrelation}. Ideally, one would like to bound the degree of this factor polynomially in $[K(s):\Q]$ and sub-polynomially in $h(s)$.

In \cite{daworrpap,papaspadicpart1,papaspadicpart3}, the problem of bounding the number of such places was isolated in the study of intersections of a curve $S$, either in $\mathcal{A}_2$ or in $Y(1)^3$, with special subvarieties of the smallest codimension relevant to Zilber--Pink in these settings, namely codimension $2$ special subvarieties.

Considering intersections with special subvarieties of higher codimension, as is done here and in \cite{dawpap}, reduces the desired height bounds to a Lang--Trotter-type sparsity statement for pairs of elliptic curves defined over a number field $K$. As noted in the introduction, this conjecture for pairs has been studied before, at least over $\Q$; see \cite{akbaryparks,fouvrymurty}. The form of the conjecture used here, namely \Cref{conjsupersingLT}, is slightly weaker than the bounds predicted by Sato--Tate-type heuristics. For more on this point, we refer to \cite{dawpap}.

From the perspective of the $G$-functions method, the sparsity of supersingular places of proximity emerges as the natural number-theoretic obstruction in this approach to Zilber--Pink. Based on Sato--Tate-type heuristics, it is natural, at least in the settings considered here and in \cite{dawpap}, to expect that the places of supersingular reduction of the central fiber over $s_0$ are sparse when $s_0$ is sufficiently generic in the ambient moduli space. This is precisely the gap filled by \Cref{conjsupersingLT} in our setting. In contrast, the same heuristics no longer predict such sparsity for the places of supersingular reduction of the central fiber over $s_0$ when $s_0$ is not generic. For example, for CM elliptic curves, Deuring's criterion implies that, away from finitely many primes, supersingular reduction occurs precisely at the primes which are inert in the CM field. In particular, the supersingular primes have density $1/2$ among the rational primes.

\section*{Acknowledgments} 

The author thanks Gal Binyamini for suggesting that the techniques of \cite{dawpap} might be applied to the setting of $Y(1)^n$, for numerous helpful discussions concerning the results of this paper, and for his encouragement to prepare this manuscript. 

During the preparation of this work, the author was supported by the Minerva Research Foundation Member Fund while in residence at the Institute for Advanced Study during the academic year 2025--26.

\bibliographystyle{plain}
\bibliography{biblio}

@book {andre1989g,
	AUTHOR = {Andr\'{e}, Y.},
	TITLE = {{$G$}-functions and geometry},
	SERIES = {Aspects of Mathematics, E13},
	PUBLISHER = {Friedr. Vieweg \& Sohn, Braunschweig},
	YEAR = {1989},
	PAGES = {xii+229},
	ISBN = {3-528-06317-3},
	MRCLASS = {11J82 (11-02 11G10 11J87 12H25)},
	MRNUMBER = {990016},
	MRREVIEWER = {C. L. Stewart},
	DOI = {10.1007/978-3-663-14108-2},
	URL = {https://doi.org/10.1007/978-3-663-14108-2},
}

@article {pilagen,
	AUTHOR = {Pila, J.},
	TITLE = {On a modular {F}ermat equation},
	JOURNAL = {Comment. Math. Helv.},
	FJOURNAL = {Commentarii Mathematici Helvetici. A Journal of the Swiss
	Mathematical Society},
	VOLUME = {92},
	YEAR = {2017},
	NUMBER = {1},
	PAGES = {85--103},
	ISSN = {0010-2571,1420-8946},
	MRCLASS = {11G18 (03C64 11D41 14G35)},
	MRNUMBER = {3615036},
	MRREVIEWER = {Francesc\ Castell\`a},
	DOI = {10.4171/CMH/407},
	URL = {https://doi.org/10.4171/CMH/407},
}

@incollection {fouvrymurty,
	AUTHOR = {Fouvry, \'E. and Murty, M. R.},
	TITLE = {Supersingular primes common to two elliptic curves},
	BOOKTITLE = {Number theory ({P}aris, 1992--1993)},
	SERIES = {London Math. Soc. Lecture Note Ser.},
	VOLUME = {215},
	PAGES = {91--102},
	PUBLISHER = {Cambridge Univ. Press, Cambridge},
	YEAR = {1995},
	ISBN = {0-521-55911-1},
	MRCLASS = {11G05 (11N05)},
	MRNUMBER = {1345175},
	MRREVIEWER = {Eric\ Liverance},
	DOI = {10.1017/CBO9780511661990.007},
	URL = {https://doi.org/10.1017/CBO9780511661990.007},
}

@book {langtrotter,
	AUTHOR = {Lang, S. and Trotter, H.},
	TITLE = {Frobenius distributions in {${\rm GL}\sb{2}$}-extensions},
	SERIES = {Lecture Notes in Mathematics},
	VOLUME = {Vol. 504},
	NOTE = {Distribution of Frobenius automorphisms in ${\rm
	GL}\sb{2}$-extensions of the rational numbers},
	PUBLISHER = {Springer-Verlag, Berlin-New York},
	YEAR = {1976},
	PAGES = {iii+274},
	MRCLASS = {12A50 (10K05)},
	MRNUMBER = {568299},
	MRREVIEWER = {G.\ Frey},
}

@article {masserwuisogellcurves,
	AUTHOR = {Masser, D. W. and W\"ustholz, G.},
	TITLE = {Estimating isogenies on elliptic curves},
	JOURNAL = {Invent. Math.},
	FJOURNAL = {Inventiones Mathematicae},
	VOLUME = {100},
	YEAR = {1990},
	NUMBER = {1},
	PAGES = {1--24},
	ISSN = {0020-9910,1432-1297},
	MRCLASS = {11G05 (11J89 14G25 14K02)},
	MRNUMBER = {1037140},
	MRREVIEWER = {Marc\ Hindry},
	DOI = {10.1007/BF01231178},
	URL = {https://doi.org/10.1007/BF01231178},
}

@misc{papaszpy1,
	title={Zilber-{P}ink in ${Y}(1)^n$: {B}eyond multiplicative degeneration}, 
	author={Papas, G.},
	year={2024},
	eprint={2402.09487},
	archivePrefix={arXiv},
	primaryClass={math.NT},
	url={https://arxiv.org/abs/2402.09487}, 
}

@article{daworrpap,
	title={Some new cases of {Z}ilber-{P}ink in ${Y}(1)^3$},
	author={Daw, C. and Orr, M. and Papas, G.},
	journal={arXiv preprint arXiv:2510.09603},
	year={2025}
}

@article{dawpap,
	title={Lang-{T}rotter phenomena and unlikely intersections},
	author={Daw, C. and Papas, G.},
	journal={available on arXiv},
	year={2026}
}

@incollection {bombg,
	AUTHOR = {Bombieri, E.},
	TITLE = {On {$G$}-functions},
	BOOKTITLE = {Recent progress in analytic number theory, {V}ol. 2 ({D}urham,
	1979)},
	PAGES = {1--67},
	PUBLISHER = {Academic Press, London-New York},
	YEAR = {1981},
	MRCLASS = {10F35 (33A70)},
	MRNUMBER = {637359},
	MRREVIEWER = {F. Beukers},
}

@article{habeggerpila1,
	title={Some unlikely intersections beyond {A}ndr{\'e}--{O}ort},
	author={Habegger, P. and Pila, J.},
	journal={Compositio Mathematica},
	volume={148},
	number={1},
	pages={1--27},
	year={2012},
	publisher={London Mathematical Society}
}

@book {hindrysilverman,
	AUTHOR = {Hindry, M. and Silverman, J.},
	TITLE = {Diophantine geometry},
	SERIES = {Graduate Texts in Mathematics},
	VOLUME = {201},
	NOTE = {An introduction},
	PUBLISHER = {Springer-Verlag, New York},
	YEAR = {2000},
	PAGES = {xiv+558},
	ISBN = {0-387-98975-7; 0-387-98981-1},
	MRCLASS = {11Gxx (11-02 11G10 11G30 11G50 14G25)},
	MRNUMBER = {1745599},
	MRREVIEWER = {Dino J. Lorenzini},
	DOI = {10.1007/978-1-4612-1210-2},
	URL = {https://doi.org/10.1007/978-1-4612-1210-2},
}

@book {pilabook,
	AUTHOR = {Pila, J.},
	TITLE = {Point-counting and the {Z}ilber-{P}ink conjecture},
	SERIES = {Cambridge Tracts in Mathematics},
	VOLUME = {228},
	PUBLISHER = {Cambridge University Press, Cambridge},
	YEAR = {2022},
	PAGES = {x+254},
	ISBN = {978-1-009-17032-1},
	MRCLASS = {11U09 (11-02 11D45 11G35)},
	MRNUMBER = {4420059},
	DOI = {10.1017/9781009170314},
	URL = {https://doi.org/10.1017/9781009170314},
}

@incollection {siegel,
	AUTHOR = {Siegel, C. L.},
	TITLE = {\"{U}ber einige {A}nwendungen diophantischer {A}pproximationen
	[reprint of {A}bhandlungen der {P}reu\ss ischen {A}kademie der
	{W}issenschaften. {P}hysikalisch-mathematische {K}lasse 1929,
	{N}r. 1]},
	BOOKTITLE = {On some applications of {D}iophantine approximations},
	SERIES = {Quad./Monogr.},
	VOLUME = {2},
	PAGES = {81--138},
	PUBLISHER = {Ed. Norm., Pisa},
	YEAR = {2014},
	MRCLASS = {11G30 (01A75 11Jxx)},
	MRNUMBER = {3330350},
}

@article{papaspadicpart1,
	title={On the $v$-adic values of {G}-functions {I}},
	author={Papas, G.},
	journal={},
	year={2025}
}

@article{papaspadicpart2,
	title={On the $v$-adic values of {G}-functions {II}},
	author={Papas, G.},
	journal={},
	year={2025}
}

@article{papaspadicpart3,
	title={On the $v$-adic values of {G}-functions {III}},
	author={Papas, G.},
	journal={},
	year={2025}
}

@article {daworr4,
	AUTHOR = {Daw, C. and Orr, M.},
	TITLE = {Zilber-{P}ink in a product of modular curves assuming
	multiplicative degeneration},
	JOURNAL = {Duke Math. J.},
	FJOURNAL = {Duke Mathematical Journal},
	VOLUME = {174},
	YEAR = {2025},
	NUMBER = {13},
	PAGES = {2877--2926},
	ISSN = {0012-7094,1547-7398},
	MRCLASS = {11G18 (11G50 14G35)},
	MRNUMBER = {4965193},
	DOI = {10.1215/00127094-2025-0011},
	URL = {https://doi.org/10.1215/00127094-2025-0011},
}

@book {silvermanell,
	AUTHOR = {Silverman, J. H.},
	TITLE = {The arithmetic of elliptic curves},
	SERIES = {Graduate Texts in Mathematics},
	VOLUME = {106},
	PUBLISHER = {Springer-Verlag, New York},
	YEAR = {1986},
	PAGES = {xii+400},
	ISBN = {0-387-96203-4},
	MRCLASS = {11G05 (14Gxx 14K07 14K15)},
	MRNUMBER = {817210},
	MRREVIEWER = {Robert S. Rumely},
	DOI = {10.1007/978-1-4757-1920-8},
	URL = {https://doi.org/10.1007/978-1-4757-1920-8},
}

@article {akbaryparks,
	AUTHOR = {Akbary, A. and Parks, J.},
	TITLE = {On the {L}ang-{T}rotter conjecture for two elliptic curves},
	JOURNAL = {Ramanujan J.},
	FJOURNAL = {Ramanujan Journal. An International Journal Devoted to the
	Areas of Mathematics Influenced by Ramanujan},
	VOLUME = {49},
	YEAR = {2019},
	NUMBER = {3},
	PAGES = {585--623},
	ISSN = {1382-4090,1572-9303},
	MRCLASS = {11G05 (11M41)},
	MRNUMBER = {3979693},
	MRREVIEWER = {Joseph\ H.\ Silverman},
	DOI = {10.1007/s11139-018-0050-7},
	URL = {https://doi.org/10.1007/s11139-018-0050-7},
}
\end{document}